%%%

\documentclass[11pt]{amsart}
\usepackage{amsfonts,latexsym,amsthm,amssymb,graphicx}
\usepackage{mathabx}
\usepackage[all]{xy}
\usepackage[usenames]{color} 
\usepackage{tikz}
\usetikzlibrary{shapes}
\usepackage{xypic}
\usetikzlibrary{decorations.pathreplacing}

\DeclareFontFamily{OT1}{rsfs}{}
\DeclareFontShape{OT1}{rsfs}{n}{it}{<-> rsfs10}{}
\DeclareMathAlphabet{\mathscr}{OT1}{rsfs}{n}{it}

\setlength{\textwidth}{6 in}
\setlength{\textheight}{8.75 in}
\setlength{\topmargin}{-0.25in}
\setlength{\oddsidemargin}{0.25in}
\setlength{\evensidemargin}{0.25in}

\CompileMatrices

\newtheorem{theorem}{Theorem}[section]
\newtheorem{lemma}[theorem]{Lemma}
\newtheorem{corol}[theorem]{Corollary}
\newtheorem{prop}[theorem]{Proposition}

\newtheorem{conj}{Conjecture}

{\theoremstyle{definition} }
{\theoremstyle{remark} \newtheorem{remark}[theorem]{Remark}
\newtheorem{example}[theorem]{Example}}

\newcommand{\caL}{{\mathcal L}}
\newcommand{\cT}{{\mathcal T}}

\newcommand{\h}{\mathfrak{h}}
\newcommand{\g}{\mathfrak{g}}
\DeclareMathOperator{\Gr}{Gr}
\DeclareMathOperator{\SL}{SL}
\newcommand{\C}{{\mathbb C}}
\newcommand{\Z}{{\mathbb Z}}
\newcommand{\cE}{{\mathcal E}}
\newcommand{\cF}{{\mathcal F}}
\newcommand{\cO}{{\mathcal O}}
\newcommand{\cQ}{{\mathcal Q}}
\newcommand{\id}{\text{id}}
\newcommand{\Abb}{{\mathbb{A}}}
\newcommand{\Nbb}{{\mathbb{N}}}
\newcommand{\Pbb}{{\mathbb{P}}}

\newcommand{\one}{1\hskip-3.5pt1}
\newcommand{\csm}{{c_{\text{SM}}}}
\DeclareMathOperator{\Fl}{Fl}

\newcommand{\qede}{\hfill$\lrcorner$}

%%%

\title[CSM classes for Schubert cells]{
Chern-Schwartz-MacPherson classes for Schubert cells in flag manifolds
}
\author{Paolo Aluffi}
\author{Leonardo C.~Mihalcea}
\address{
Mathematics Department, 
Florida State University,
Tallahassee FL 32306
}
\email{aluffi@math.fsu.edu}
\address{
Department of Mathematics, 
Virginia Tech University, 
Blacksburg, VA 24061
}
\email{lmihalce@vt.edu}

\begin{document}

\thanks{P.~A. was supported in part by a Simons Collaboration Grant 
and NSA Award H98230-15-1-0027; L.~C.~M. was supported in part
by a Simons Collaboration Grant and
by NSA Young Investigator Award H98230-13-1-0208.}

\begin{abstract}
We obtain an algorithm computing the Chern-Schwartz-MacPherson (CSM) 
classes of Schubert cells in a generalized flag manifold $G/B$. In analogy to 
how the ordinary divided difference operators act on Schubert classes, each 
CSM class of a Schubert class is obtained by applying certain Demazure-Lusztig 
type operators to the CSM class of a cell of dimension one less. These operators
define a representation of the Weyl group on the homology of $G/B$.
By functoriality, we deduce algorithmic expressions for CSM classes of Schubert 
cells in any flag manifold $G/P$. We conjecture that the CSM classes of Schubert 
cells are an effective combination of (homology) Schubert classes, and prove that 
this is the case in several classes of examples. We also extend our 
results and conjectures to the torus equivariant setting.
\end{abstract}

\maketitle

%%%

\section{Introduction}\label{intro}
A classical problem in Algebraic Geometry is to define characteristic 
classes of singular algebraic varieties generalizing the notion of the 
total Chern class of the tangent bundle of a non-singular variety. 
The existence of a functorial theory of Chern classes for possibly
singular varieties was conjectured by Grothendieck and Deligne,
and established by R.~MacPherson \cite{macpherson:chern}.
This theory associates a class $c_*(\varphi)\in H_*(X)$ with every
constructible function $\varphi$ on $X$, such that $c_*(\one_X)
=c(TX)\cap [X]$ if $X$ is a smooth compact complex variety.
(The theory was later extended to arbitrary algebraically closed
fields of characteristic~$0$, with values in the Chow group
\cite{MR1063344}, \cite{MR2282409}.)
The strong functoriality properties satisfied by these classes 
determine them uniquely; we refer to \S\ref{s:cam} below for details.
If $X$ is a compact complex variety, then the class $c_*(\one_X)$ 
coincides with a class defined earlier by M.~H.~Schwartz 
\cite{schwartz:1,schwartz:2} independently of the work mentioned
above. This class is commonly known as the {\em
Chern-Schwartz-MacPherson\/} (CSM) class of $X$. In general, 
we denote by $\csm(W)$ the class $c_*(\one_W) \in H_*(X)$
for any constructible (e.g., locally closed) subset $W\subseteq X$. 
 
Let $G$ be a complex simple Lie group and let $B$ be a Borel subgroup. 
Denote by $W$ the Weyl group of $G$. The goal of this note is to 
provide an algorithm calculating the CSM classes of the Schubert 
cells $X(w)^\circ := B w B / B$ in the generalized flag manifold 
$G/B$, as $w$ varies in  $W$. To describe the answer, we need 
to recall two well known families of operators on the homology group 
$H_*(G/B)$; we refer to \S\ref{s:prelim} below for details. 

Let $s_1, \dots , s_r \in W$ be the simple reflections corresponding 
respectively to the simple roots $\alpha_1, \dots , \alpha_r$ of $G$, 
and let $\ell: W \to \mathbb{N}$ be the length function. Denote by 
$X(w) := \overline{Bw B/B}$ the Schubert variety corresponding to 
$w$; it is a subvariety of $G/B$ of complex dimension $\ell(w)$. 
For each $1 \le k \le r$, the classical BGG operator \cite{BGG} is 
an operator $\partial_k:H_*(G/B) \to H_{*+2}(G/B)$ which sends the 
Schubert class $[X(w)]$ to $[X(w s_k)]$ if $\ell(ws_k) > \ell(w)$ 
and to $0$ otherwise. The Weyl group admits a right action on 
$H_*(G/B)$, which was originally used to {\em define} the BGG 
operator $\partial_k$. For $1 \le k \le r$ define the non-homogeneous 
operator 
\[ 
\cT_k:= \partial_k - s_k : H_*(G/B) \to H_*(G/B) \/,
\] 
where $s_k$ denotes the (right) action of the simple reflection $s_k$. 
The main result of this note is the following.

\begin{theorem}\label{T:mainintro} 
Let $w \in W$ be a Weyl group element and $1 \le k \le r$. Then 
\[ 
\cT_k( \csm(X(w)^\circ)) = \csm(X (w s_k)^\circ) \/. 
\] 
\end{theorem}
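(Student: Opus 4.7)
The plan is to exploit the minimal parabolic $\mathbb{P}^1$-bundle $\pi_k : G/B \to G/P_k$. I would first handle the case $\ell(ws_k) > \ell(w)$ and then deduce the opposite length case from the involutivity $\cT_k^2 = \id$. The latter is a short operator calculation: $\partial_k^2 = 0$ and $s_k^2 = \id$ are built in, while the cross-terms $\partial_k s_k + s_k\partial_k$ vanish because the identities $\alpha_k \partial_k = 1 - s_k$ and $\partial_k \alpha_k = 2 - \alpha_k \partial_k$ (the twisted Leibniz rule, using $\partial_k(\alpha_k)=2$ and $s_k(\alpha_k)=-\alpha_k$) give $\partial_k s_k = -\partial_k$ and $s_k\partial_k = \partial_k$. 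Once $\cT_k^2=\id$ is known, applying $\cT_k$ to the identity $\cT_k\csm(X(w')^\circ) = \csm(X(w)^\circ)$ with $w' := ws_k$ (so that $\ell(w's_k)>\ell(w')$ and the first case applies) delivers the case $\ell(ws_k)<\ell(w)$ as well.

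For $\ell(ws_k) > \ell(w)$, write $Y^\circ := \pi_k(X(w)^\circ) \subseteq G/P_k$. The key geometric inputs are that $\pi_k$ restricts to an isomorphism $X(w)^\circ \xrightarrow{\sim} Y^\circ$ and that
\[
\pi_k^{-1}(Y^\circ) \;=\; X(w)^\circ \,\sqcup\, X(ws_k)^\circ,
\]
with $\pi_k : X(ws_k)^\circ \to Y^\circ$ an $\mathbb{A}^1$-bundle. Three functoriality properties of $\csm$ then combine to finish the computation: additivity on this constructible decomposition; proper pushforward applied to the bijection $\pi_k|_{X(w)^\circ}$, giving $(\pi_k)_*\csm(X(w)^\circ) = \csm(Y^\circ)$; and the smooth pullback formula $\csm(\pi_k^{-1}(Y^\circ)) = c(T_{\pi_k}) \cap \pi_k^*\csm(Y^\circ)$, valid because $\pi_k$ is smooth of relative dimension one. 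Together these yield
\[
\csm(X(ws_k)^\circ) \;=\; c(T_{\pi_k}) \cap \pi_k^*(\pi_k)_*\csm(X(w)^\circ) \;-\; \csm(X(w)^\circ).
\]

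To translate this into the stated operator identity, I would invoke three standard facts to be reviewed in \S\ref{s:prelim}: the BGG operator is realized geometrically as $\partial_k = \pi_k^*(\pi_k)_*$ on $H_*(G/B)$ (verified on the Schubert basis using the Bruhat decomposition); the relative tangent bundle satisfies $c_1(T_{\pi_k}) = \alpha_k$, coming from the identification of the tangent space to the fiber at $eB$ with $\mathfrak{g}_{-\alpha_k}$; and the operator identity $\alpha_k \partial_k = 1 - s_k$ describing the right $W$-action. Substituting $c(T_{\pi_k}) = 1 + \alpha_k$ and $\alpha_k \partial_k \xi = (1 - s_k)\xi$ into the displayed expression collapses it to $(\partial_k - s_k)\csm(X(w)^\circ) = \cT_k \csm(X(w)^\circ)$, as desired.

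The main technical hurdle I anticipate is the smooth pullback formula $\csm(\pi^{-1}(W)) = c(T_\pi) \cap \pi^*\csm(W)$. While this is essentially folklore in characteristic zero, and can be obtained from the product formula for MacPherson's transformation together with a trivialization of $\pi_k$ over the affine cell $Y^\circ$, it must be quoted or verified carefully; and all sign conventions for $c_1(T_{\pi_k})$, for the BGG operator $\partial_k$, and for the right action of $s_k$ must then be calibrated simultaneously so that the output is exactly $\cT_k = \partial_k - s_k$ rather than some variant.
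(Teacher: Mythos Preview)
Your argument is correct and takes a genuinely different route from the paper's. The paper proves the case $\ell(ws_k)<\ell(w)$ first, by choosing a reduced word $s_{i_1}\cdots s_{i_k}$ for $w$ and working on the Bott-Samelson resolution $\theta:Z_{i_1,\dots,i_k}\to X(w)$: the CSM class of $Z^\circ$ is computed via the logarithmic formula~\eqref{E:csmdefSNC} for the SNC complement (Lemma~\ref{lem:CSMinBS}), and a direct push-forward calculation (Theorem~\ref{thm:recursion}) identifies $\theta_*\big((1+h_k)\cdot\pi^*(-)\big)$ with $\cT_{i_k}\circ\theta'_*$, yielding the recursion of Corollary~\ref{cor:recursion}. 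The opposite length case is then deduced from $\cT_k^2=\id$ (Proposition~\ref{prop:T}(a)), exactly as you propose but with the two cases handled in the reverse order. Your approach bypasses Bott-Samelson varieties entirely by invoking the Verdier--Riemann--Roch compatibility $c_*(\pi^*\varphi)=c(T_\pi)\cap\pi^*c_*(\varphi)$ for the smooth map $\pi_k$, together with the standard Bruhat decomposition of $\pi_k^{-1}(Y^\circ)$; this is more direct and conceptual, but imports that compatibility as a black box, whereas the paper's argument is self-contained from the logarithmic construction of $c_*$. The Bott-Samelson framework also pays dividends elsewhere in the paper: the closed formula $\csm(X(w)^\circ)=\theta_*\big(\prod_j(1+h_j)\cap[Z]\big)$ is used in the codimension-one positivity computation (Proposition~\ref{prop:codone}), and the same diagram~\eqref{E:BSconst} carries over unchanged to the equivariant setting in~\S\ref{s:equiv}. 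Your caution about calibrating the sign of $c_1(T_{\pi_k})$ against the conventions for $\caL_{\alpha_k}$ and~\eqref{E:sk} is well placed; with the paper's convention $\caL_\lambda=G\times^B\C_{-\lambda}$ one has $T_{\pi_k}\cong\caL_{\alpha_k}$, and then $(1+c_1(\caL_{\alpha_k}))\partial_k-\id=\partial_k-s_k$ as required.
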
 
In the case $w=\id$, the Schubert cell $X(\id)^\circ$ is a point, and 
$\csm([pt]) = [pt]$. More generally, if $w= s_{i_1} \ldots s_{i_k}$, 
then the theorem implies that the CSM class $\csm(X(w)^\circ)$ 
is obtained by composing the operators $\cT_{i_k} \cdots \cT_{i_1}$. 
This is reminiscent of the classical situation in Schubert Calculus, 
where one generates all the Schubert classes by applying 
successively the BGG operators $\partial_k$. To further the analogy, 
$\partial_k^2 = 0$ and the BGG operators satisfy the braid relations. 
We prove that $\cT_k^2=1$, and then Theorem~\ref{T:mainintro} can 
be used to show that the operators $\cT_k$ also satisfy the braid relations 
(Proposition~\ref{prop:T}). In particular, there is a well defined operator 
$\cT_w$ associated with any $w \in W$, and this yields a new representation 
of the Weyl group on $H_*(G/B)$. The CSM classes of the Schubert cells 
$X(w)^\circ$ are the values obtained by applying to the class of a point $[pt]$ 
the operators $T_{w^{-1}}$ in this representation.
Further, the action of each simple reflection $\cT_k$ on 
Schubert classes can be written explicitly using the Chevalley 
formula (Proposition~\ref{prop:Tk}). This gives an {\em explicit\/} 
algorithm to calculate the CSM class of any Schubert cell.

We also note that $\cT_k$ is related to a specialization of the 
{\em Demazure-Lusztig operator\/} defined in \cite[\S12]{ginzburg}, 
in relation to degenerate Hecke algebras. 
We plan on investigating this connection further 
in a future paper.

Perhaps the most surprising feature of the CSM classes (and of the 
operators $\cT_k$) is a positivity property. It follows from the definition 
of CSM classes that 
\[ 
\csm(X(w)^\circ) = \sum_{v \le w} c(v;w) [X(v)] \/, 
\] 
where $\le$ denotes the Bruhat ordering, and $c(w;w) = c(\id;w) = 1$. 
Despite the fact that $\cT_k$ does not preserve the positivity of a combination
of Schubert classes, we conjecture that $c(v;w) > 0$ for all $v \le w$. 
We have checked this by explicit computations for thousands of Schubert 
cells $X(w)$ in type A flag manifolds $\Fl(n)$ for $n \le 8$.
We were also able to prove that this positivity holds for several families of 
Schubert cells, across all Lie types; see \S\ref{s:posCSM} below.

Let $P \subset G$ be a parabolic subgroup containing $B$, and let 
$p : G/B \to G/P$ be the natural projection. Then $p(X(w)^\circ) = 
X(w W_P)^\circ$, where $W_P$ is the subgroup of $W$ generated 
by the reflections in $P$. The functoriality of CSM classes can be 
used to prove that 
\[ 
p_* ( \csm(X(w)^\circ)) = \csm(X(w W_P)^\circ)
\] 
(see Proposition~\ref{prop:pfcsm} below). In particular, the (conjectured) 
positivity of CSM classes of Schubert cells in $G/B$ implies the positivity 
of classes in any $G/P$. In the case when $G/P$ is a Grassmann 
manifold this was proved in several cases by the authors of this note
\cite{aluffi.mihalcea:csm, mihalcea:binomial}, B.~Jones \cite{MR2628830}, 
J.~Stryker \cite{stryker}, and it was settled for all cases by J.~Huh \cite{huh:csm}. 
Huh was able to realize the homogeneous components of CSM classes 
as a class of an effective cycle, but unfortunately his main technical 
requirements do not seem to hold for arbitrary flag manifolds $G/B$. 

Our calculation of CSM classes is based on a construction of these 
classes in terms of bundles of logarithmic tangent fields. For every
$W\subseteq X$, the class $\csm(\one_W)\in H_*(X)$ may be obtained by
pushing forward to $X$ the total Chern class of $T_{\widetilde W}(-\log D)$,
where $\widetilde W$ is a resolution of $\overline W$ such that 
$D:=\widetilde W\smallsetminus W$ is a simple normal crossing (SNC)
divisor (\cite{aluffi:diff}, \cite{MR2209219}). We refer to \S\ref{s:cam} for details.
This approach was used successfully in previous work by the authors
\cite{aluffi.mihalcea:csm} to compute CSM classes of Schubert cells 
in the Grassmann manifold and prove partial positivity results. In that case one can use a resolution 
of a Schubert variety which is a (smooth) Schubert variety, but in a 
partial flag manifold. B.~Jones \cite{MR2628830} gave an alternative computation of the
classes, by means of a different (small) resolution, and also obtained
partial positivity results.
The resolution used in~\cite{aluffi.mihalcea:csm} has finitely many orbits 
of the Borel subgroup $B$, and this was a key fact in Huh's full proof of
the positivity conjecture for CSM classes in that case \cite{huh:csm}. 

For generalized flag manifolds $G/B$ a resolution is given by 
{\em Bott-Samelson varieties}. These are iterated $\mathbb{P}^1$-bundles, 
and they can be constructed from any (possibly non-reduced) word 
consisting of simple reflections. Section~\ref{ss:BS} is devoted to the 
definition and cohomological properties of Bott-Samelson varieties. 
The key technical result in the paper is Theorem~\ref{thm:recursion}, 
establishing the necessary cohomological formulas calculating the 
push-forward of the Chern class of the logarithmic tangent bundle. 
The operators $\partial_k$ and $s_k$, and $\cT_k = \partial_k - s
_k$, appear naturally in these 
push-forward formulas.
Properties of the operators $\cT_k$ are discussed 
in \S\ref{s:propTk}, and in~\S\ref{s:posCSM} we formulate the positivity
conjecture and a related identity and provide partial evidence for these
statements. In~\S\ref{s:equiv} we consider the torus-equivariant setting,
and prove (Theorem~\ref{thm:equimain}) that a direct analogue of 
Theorem~\ref{T:mainintro} holds for the {\em equivariant\/} CSM classes 
of Schubert cells. We also propose equivariant generalizations of the 
conjectures presented in~\S\ref{s:posCSM}.

Equivariant Chern-Schwartz-MacPherson classes were defined 
by T.~Ohmoto \cite{ohmoto:eqcsm} and have recently been the object of further
study: A.~Weber \cite{MR2928940} proved localization formulas for these
classes, and in \cite{RV} R.~Rim\'anyi and A.~Varchenko use Weber's results 
to prove that equivariant CSM classes of Schubert cells agree with the $\kappa$ 
classes they studied in earlier work \cite{MR3229982}. 
The $\kappa$ classes are related to the stable envelopes of D. Maulik and A. Okounkov 
\cite{maulik.okounkov:quantum}, which are classes in the equivariant cohomology 
of the cotangent bundle of flag manifolds. In future work we plan to investigate 
further the connection between the CSM classes and stable envelopes, both from 
the localization point of view (cf.~\cite{RV} and \cite{su:restriction}) and from the 
point of view of Ginzburg's theory of bivariant Chern classes \cite{ginzburg:characteristic}.
\smallskip

{\em Acknowledgements.} We are grateful to Mark Shimozono for pointing 
out some algebraic identities between the operators $\partial_k$ and 
$s_k$; and to Dave Anderson, J\"org Sch\"urmann, and Chanjiang Su for 
useful discussions. 

%%%

\section{Preliminaries}\label{s:prelim}
The goal of this section is to recall some basic facts on the
cohomology of flag manifolds and on Bott-Samelson resolutions. We
refer to \cite{brion.kumar:frobenius}, \S2.1--\S 2.2, for more details and 
references to the standard literature.

\subsection{Flag manifolds and Schubert varieties}\label{ss:FmaSv}
Let $G$ be a complex simple Lie group and 
let $T \subseteq B \subseteq G$
be a maximal torus included in a Borel subgroup of $G$. Let $\h$ and
$\g$ be the Lie algebras of $T$ and $G$, and let $R \subseteq \h^*$ be
the associated root system with the set of positive roots $R^+$
determined by $B$. Denote by $\Delta:= \{\alpha_1, \dots, \alpha_r \}
\subseteq R^+$ the set of simple roots. Let $R^\vee $ denote the set of
coroots $\alpha^\vee \in \h$ and $\langle \cdot , \cdot \rangle: R
\otimes R^\vee \to \Z$ the evaluation pairing.

To each root $\alpha \in R$ one associates 
a reflection $s_\alpha$ in the
Weyl group $W=N_G(T)/T$. The set of simple reflections $s_i:=
s_{\alpha_i}$ generate $W$, thus each $w \in W$ can be written as $w=
s_{i_1} \cdots s_{i_k}$. The minimal such integer $k$ is denoted by
$\ell(w)$, the length of $w$. If $k = \ell(w)$, then the decomposition
$w= s_{i_1} \cdots s_{i_k}$ is said to be
{\em reduced}. 
There is a
partial order on $W$ called the {\em Bruhat ordering}, defined as
follows: $u<v$ if there exists a chain 
\[ 
u_0:=u \to u_1:= u s_{\beta_1} \to u_2:=u_1 s_{\beta_2} \to \cdots
\to u_n:=v=u_{n-1} s_{\beta_n} 
\] 
where the
$\beta_i$'s are roots in $R$ such that $\ell(u_i) > \ell(u_{i-1})$.
Let $G/B$ be the generalized flag variety. This is a homogeneous
space for $G$ (hence a non-singular variety) and it is stratified by
{\em Schubert cells} $X(w)^\circ:= B w B/ B$, where $w \in W$, and 
each such cell $X(w)^\circ$ is isomorphic to $\C^{\ell(w)}$. The closure
$X(w):= \overline{BwB/B}$ is called a {\em Schubert variety}. Each
Schubert variety $X(w)$ has a fundamental class $[X(w)] \in H_{2
  \ell(w)}(G/B)$, and these classes form a $\Z$-basis for the
(co)homology.
It may be verified that $X(v)\subseteq X(w)$ if and only if $v\le w$ in
the Bruhat order. In fact, $X(w)=\bigcup_{v\le w} X(v)^\circ$. It follows
(\cite[Example 1.9.1]{fulton:it}) that any class in $H_*(X(w))$ may
be written uniquely as an integer linear combination
$\sum_{v\le w} c_v [X(v)]$.

Let $\h^*_\Z$ be the integral weight lattice and let
$\lambda \in \h^*_\Z$ be an integral weight.~Then one
constructs the $G$-equivariant line bundle over $G/B$ 
\[
\caL_\lambda:= G \times^B \C_{-\lambda} = (G \times \C)/B 
\]
where $B$ acts on $G \times \C$ by $b. (g,u) = (g b^{-1},
\lambda(b)^{-1} u)$ (and the action of $B= UT$ on $\C$ extends the
action of $T$ so that it is trivial over the unipotent group $U$).

The {\em Chevalley formula\/} states that
\begin{equation}\label{E:Chevalley}
c_1(\caL_\lambda)\cdot [X(w)] = \sum \langle \lambda,\beta^\vee\rangle
[X(ws_\beta)]\/,
\end{equation}
where the sum is over all positive roots $\beta$ such that $\ell(ws_\beta)
=\ell(w)-1$. See e.g.,~\cite[Lemma 8.1]{fulton.woodward:quantum}.

\subsection{Two operators acting on $H^*(G/B)$}\label{ss:twoops} 
For each simple root $\alpha_k \in \Delta$ one can construct the BGG 
operator $\partial_k:H^*(G/B) \to H^*(G/B)$ defined in \cite{BGG} as follows. Let
$P_k \subseteq G$ be the minimal parabolic subgroup corresponding to
$\alpha_k$. Then the natural projection $\pi:G/B \to G/P_{k}$ is a
$\Pbb^1$-bundle and there is a fiber square 
\[ 
\xymatrix{
G/B \times_{G/P_k} G/B \ar[rr]^-{pr_1}\ar[d]^{pr_2} && G/B \ar[d]^{p_k} \\ 
G/B \ar[rr]^{p_k}&& G/P_k} 
\] 
The BGG operator is defined to be $\partial_k = p_k^* (p_k)_*$. 
We record next two properties of this operator; see e.g., 
\cite[Proposition 2]{knutson:noncomplex} or 
\cite{tymoczko:permaction} for simple proofs.

\begin{prop}\label{prop:BGGop} The operator $\partial_k$ satisfies 
the following properties:
\begin{itemize}
\item[(a)] For all Weyl group elements $w \in W$,
\begin{equation}\label{E:partial} 
\partial_k([X(w)])= 
\begin{cases} 
0 & \textrm{if } \ell(ws_k) < \ell(w) \\  
[X(ws_k)] & \textrm{if } \ell(ws_k) > \ell(w)
\end{cases} 
\end{equation} 
In particular, $\partial_k^2 = 0$ and the BGG operators satisfy the
same braid relations as the elements of the Weyl group.
\item[(b)] For all $\gamma_1, \gamma_2 \in H^*(G/B)$,
\[ 
\partial_k(\gamma_1 \gamma_2) = \partial_k(\gamma_1) \gamma_2 
+ \gamma_1 \partial_k(\gamma_2) - c_1(\caL_{\alpha_k}) 
\partial_k(\gamma_1) \partial_k(\gamma_2) 
\] where $c_1(\caL_{\alpha_k})$ denotes the Chern class of
$\caL_{\alpha_k}$.
\end{itemize}
\end{prop}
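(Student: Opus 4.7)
The plan is to derive both parts directly from the geometric definition $\partial_k = p_k^* \circ (p_k)_*$ together with the fact that $p_k \colon G/B \to G/P_k$ is a $\Pbb^1$-bundle whose fibers are the right $P_k$-orbits on $G/B$.

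For part (a), the decisive observation is that $X(w)$ is right-$P_k$-invariant precisely when $\ell(ws_k) < \ell(w)$. If $\ell(ws_k) < \ell(w)$, then $X(w) = p_k^{-1}(p_k(X(w)))$ and the restriction $p_k|_{X(w)}$ is a $\Pbb^1$-bundle onto its image, so the generic fibre is positive-dimensional and $(p_k)_*[X(w)] = 0$ for dimension reasons; hence $\partial_k([X(w)]) = 0$. If instead $\ell(ws_k) > \ell(w)$, then $X(ws_k) = X(w) \cdot P_k = p_k^{-1}(p_k(X(w)))$, and the restriction $p_k|_{X(w)} \colon X(w) \to p_k(X(ws_k))$ is birational; thus $(p_k)_*[X(w)] = [p_k(X(ws_k))]$, and applying $p_k^*$ (computed by inverse image thanks to flatness) yields $[X(ws_k)]$. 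The identity $\partial_k^2 = 0$ is then immediate: applied twice to $[X(w)]$, the operator either vanishes at the first step or lands in $[X(ws_k)]$ and vanishes at the next, since $(ws_k)s_k = w < ws_k$. For the braid relations, one observes that given any reduced word $(i_1, \dots, i_p)$ for $w \in W$, the composition $\partial_{i_1} \cdots \partial_{i_p}$ sends $[X(v)]$ to $[X(v w^{-1})]$ when $\ell(vw^{-1}) = \ell(v)+\ell(w)$ and to $0$ otherwise; since this prescription depends only on $w$, all compositions along different reduced words for $w$ agree on the Schubert basis, forcing the same relations among the $\partial_k$ as among the $s_k$.

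For part (b), the most efficient approach is to pass to the Borel presentation of $H^*(G/B)$ as a quotient of the polynomial algebra $\Z[\h^*]$, under which $c_1(\caL_{\alpha_k})$ corresponds to $\alpha_k$ and $\partial_k$ corresponds to the algebraic divided difference $f \mapsto (f - s_k f)/\alpha_k$. The rearrangement $s_k(f) = f - \alpha_k \, \partial_k(f)$ of this definition, combined with a direct expansion, gives
\[
\partial_k(fg) \;=\; \frac{fg - s_k(f)\,s_k(g)}{\alpha_k}
\;=\; \partial_k(f)\cdot g + f\cdot \partial_k(g) - \alpha_k\,\partial_k(f)\,\partial_k(g),
\]
which is the asserted Leibniz-type identity.

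The main obstacle is bridging the two viewpoints: part (a) relies on the sheaf-theoretic definition $p_k^*(p_k)_*$, while part (b) uses the algebraic description as a divided difference on polynomials. Reconciling them amounts to invoking the Borel presentation and identifying $c_1(\caL_{\alpha_k})$ with $\alpha_k$—equivalently, recognising that the first Chern class of the relative tangent bundle of $p_k$ is $c_1(\caL_{\alpha_k})$—after which each assertion reduces to a short verification. This same identification, encoded by $s_k = \id - c_1(\caL_{\alpha_k})\,\partial_k$ on the appropriate graded piece, is precisely the input needed later to analyze $\cT_k = \partial_k - s_k$.
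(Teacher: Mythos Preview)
The paper does not actually prove this proposition: it records the statement and refers the reader to \cite[Proposition~2]{knutson:noncomplex} and \cite{tymoczko:permaction} for proofs. So there is no ``paper's own proof'' to match; the question is simply whether your argument is sound, and it is.

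Your treatment of part~(a) is the standard geometric one. The key dichotomy---$X(w)$ is $P_k$-saturated exactly when $ws_k<w$, in which case $p_k|_{X(w)}$ has $\Pbb^1$ fibers, while otherwise $p_k|_{X(w)}$ is birational onto $p_k(X(ws_k))$---is correct, and the vanishing of $\partial_k^2$ follows at once. Your braid-relation argument is also valid: if $s_{i_1}\cdots s_{i_p}$ is reduced for $w$, then $\partial_{i_1}\cdots\partial_{i_p}[X(v)]$ is nonzero iff every intermediate multiplication increases length, and since the net length change over $p$ steps of $\pm1$ must be $+p$ exactly when $\ell(vw^{-1})=\ell(v)+p$, all steps are forced to be $+1$. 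Thus the composite depends only on $w$, which is the braid relations. This is essentially the argument in the references cited by the paper.

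For part~(b) your algebraic verification via $s_k(f)=f-\alpha_k\partial_k(f)$ is correct and is the quickest route. One small caveat: invoking the Borel presentation of $H^*(G/B)$ as a quotient of $\Z[\h^*]$ can be delicate integrally in general type, so it is cleaner to say that the identity $s_k=\id-c_1(\caL_{\alpha_k})\partial_k$ (which the paper records as~\eqref{E:sk}) already holds as operators on $H^*(G/B)$, and then your two-line expansion of $\partial_k(\gamma_1\gamma_2)=\gamma_1\gamma_2 - s_k(\gamma_1)s_k(\gamma_2)$ divided formally---or rather, substituting $s_k(\gamma_i)=\gamma_i-c_1(\caL_{\alpha_k})\partial_k(\gamma_i)$ directly---goes through without appealing to the polynomial model. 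With that adjustment, no bridging is needed and the argument is self-contained within the paper's setup.
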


For each $w \in W$ there is a well defined map $r_w:G/T \to G/T$
obtained by multiplying on the right with any lift of $w$ in
$N_G(T)$. This induces a ring endomorphism $r_w^*: H^*(G/T) \to
H^*(G/T)$. Note that the projection $G/T \to G/B$ is a $U \simeq
B/T$-bundle and because $U$ is contractible this implies that the
cohomology rings $H^*(G/B)$ and $H^*(G/T)$ are isomorphic. This
defines a right action of $W$ on $H^*(G/B)$, denoted again by $w$;
it will be clear from the context whether we refer to the Weyl group
element or to its action on $H^*(G/B)$. It is well known (see e.g., 
\cite[\S 1]{BGG}) that for $w=s_k$ this operator satisfies
\begin{equation}\label{E:sk} 
s_k =  \id - c_1(\caL_{\alpha_k}) \partial_k 
\end{equation} 
where the Chern class $c_1(\caL_{\alpha_k})$ acts on $H^*(G/B)$ by
multiplication. Combining this with the Chevalley formula
we obtain an identity 
\begin{equation}\label{E:raction} 
s_k[X(w)] = \begin{cases}
\hphantom{-}    [X(w)] & \textrm { if } \ell(ws_k)< \ell(w); \\ 
    -[X(w)] - \sum \langle \alpha_k, \beta^\vee \rangle [X(ws_k s_\beta)] 
    & \textrm{ if } \ell(w s_k) > \ell(w) . 
\end{cases} 
\end{equation} 
where the sum is over all positive roots $\beta\ne \alpha_k$ such that 
$\ell(w) = \ell(ws_k s_\beta)$.

For use in~\S\ref{s:propTk} we also record the following commutation 
relation of the operators $\partial_k$ and~$s_k$.

\begin{lemma}\label{lem:commdelksk}
With notation as above,
\[
s_k\partial_k = \partial_k\quad,\quad
\partial_k s_k = -\partial_k\/.
\]
In particular, $\partial_k s_k+s_k\partial_k = 0$.
\end{lemma}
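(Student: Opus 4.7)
The strategy is to derive both identities algebraically from formula~(\ref{E:sk}), $s_k = \id - c_1(\caL_{\alpha_k})\partial_k$, using only the two properties of $\partial_k$ already recorded in Proposition~\ref{prop:BGGop}: the vanishing $\partial_k^2 = 0$ and the Leibniz-type rule~(b). The first identity is immediate upon substituting~(\ref{E:sk}):
\[
s_k\partial_k \;=\; (\id - c_1(\caL_{\alpha_k})\partial_k)\,\partial_k \;=\; \partial_k - c_1(\caL_{\alpha_k})\,\partial_k^2 \;=\; \partial_k \/.
\]

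For the second identity, the key is to compute the composite $\partial_k\circ\bigl(c_1(\caL_{\alpha_k})\,\partial_k\bigr)$ as an operator. I would apply Proposition~\ref{prop:BGGop}(b) with $\gamma_1 = c_1(\caL_{\alpha_k})$ and $\gamma_2 = \partial_k(\gamma)$ for an arbitrary class $\gamma$; the two terms that involve $\partial_k^2(\gamma)$ then vanish, leaving
\[
\partial_k\bigl(c_1(\caL_{\alpha_k})\,\partial_k(\gamma)\bigr) \;=\; \partial_k(c_1(\caL_{\alpha_k}))\cdot \partial_k(\gamma)\/.
\]
So everything reduces to identifying the scalar $\partial_k(c_1(\caL_{\alpha_k}))\in H^0(G/B)=\Z$ as $2$. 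From the push-pull definition $\partial_k = p_k^*(p_k)_*$, the class $(p_k)_*c_1(\caL_{\alpha_k})\in H^0(G/P_k)$ is just the degree of $\caL_{\alpha_k}$ along any fiber of the $\mathbb{P}^1$-bundle $p_k$; specializing the Chevalley formula~(\ref{E:Chevalley}) to $w = s_k$, $\lambda = \alpha_k$ gives $c_1(\caL_{\alpha_k})\cdot[X(s_k)] = \langle\alpha_k,\alpha_k^\vee\rangle[X(\id)] = 2[pt]$, so this degree is $2$. Consequently $\partial_k\,c_1(\caL_{\alpha_k})\,\partial_k = 2\partial_k$, and therefore
\[
\partial_k s_k \;=\; \partial_k\bigl(\id - c_1(\caL_{\alpha_k})\,\partial_k\bigr) \;=\; \partial_k - 2\partial_k \;=\; -\partial_k \/.
\]
The final assertion $\partial_k s_k + s_k\partial_k = 0$ is then just the sum of the two identities.

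The only step that is not a one-line manipulation is the auxiliary identification $\partial_k(c_1(\caL_{\alpha_k})) = 2$. This is entirely standard---it is equivalent to saying that $\partial_k$ acts as the divided difference $f\mapsto (f - s_k f)/\alpha_k$ on the Borel generators of $H^*(G/B)$---but since it is not stated explicitly earlier in the text, it needs the short Chevalley-formula justification sketched above. An alternative route, avoiding this input, would be to verify both identities directly on the Schubert basis using~(\ref{E:partial}) and~(\ref{E:raction}); the case $\ell(ws_k)<\ell(w)$ is then transparent, but the case $\ell(ws_k)>\ell(w)$ requires a nontrivial combinatorial cancellation among the terms $[X(ws_ks_\beta)]$ appearing in $s_k[X(w)]$, which makes the algebraic route preferable.
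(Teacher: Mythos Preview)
Your proof is correct and follows essentially the same approach as the paper: both derive the first identity immediately from~\eqref{E:sk} and $\partial_k^2=0$, and both obtain the second identity by applying the Leibniz rule of Proposition~\ref{prop:BGGop}(b) to $\partial_k\bigl(c_1(\caL_{\alpha_k})\,\partial_k(\gamma)\bigr)$, reducing everything to the computation $\partial_k(c_1(\caL_{\alpha_k}))=\langle\alpha_k,\alpha_k^\vee\rangle=2$ via the Chevalley formula.
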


\begin{proof}
The first equality follows immediately from the definition of $s_k$
and the fact that $\partial_k^2=0$. The second equality is a
consequence of Proposition~\ref{prop:BGGop}(b):
\begin{align*}
\partial_k s_k & = \partial_k - \partial_k(c_1(\caL_{\alpha_k}) 
\partial_k) \\
& = \partial_k - \partial_k(c_1(\caL_{\alpha_k})) \partial_k
-c_1(\caL_{\alpha_k}) \partial_k^2 + c_1(\caL_{\alpha_k})
\partial_k(c_1(\caL_{\alpha_k})) \partial_k^2 \\
& = \partial_k - \partial_k(c_1(\caL_{\alpha_k})) \partial_k \\
& = -\partial_k\/,
\end{align*}
where we used the fact that $\partial_k(c_1(\caL_{\alpha_k}))
= \langle\alpha_k, \alpha_k^\vee \rangle = 2$
as may be checked using Proposition~\ref{prop:BGGop}(a) and
the Chevalley formula.
\end{proof}

\subsection{Bott-Samelson varieties}\label{ss:BS}
For each word $s_{i_1}s_{i_2}\dots
s_{i_k}$ for an element $w \in W$ one
can construct a tower of $\Pbb^1$ bundles, the {\em Bott-Samelson
variety\/} $Z:= Z_{i_1, \dots, i_k}$, endowed with a map $\theta:= 
\theta_{i_1, \dots, i_k}: Z \to X(w)$. If the word is reduced, this map is
birational, giving a resolution of singularities for $X(w)$ (depending
on the choice of the word for $w$). 
There are several ways to do this, but for our purpose we present 
an inductive construction which can be found e.g., 
in~\cite[\S 2.2]{brion.kumar:frobenius}.

If the word is empty, then define $Z:= pt$. In general assume we have 
constructed $Z':= Z_{i_1, \dots, i_{k-1}}$ and the map $\theta':
Z' \to X(w')$, for $w'= s_{i_1} \cdots s_{i_{k-1}}$. Define 
$Z=Z_{i_1,\dots, i_k}$ so that the left square in the diagram
\begin{equation}\label{E:BSconst} 
\xymatrix@C=50pt{
Z \ar[r]^-{\theta_1} \ar[d]_{\pi} & G/B \times_{G/P_{i_k}}
G/B \ar[r]^-{pr_1}\ar[d]^{pr_2} & G/B \ar[d]^{p_{i_k}} \\ 
Z'\ar[r]^{\theta'} & G/B \ar[r]^{p_{i_k}} & G/P_{i_k}
} 
\end{equation} 
is a fiber square; the morphisms $pr_1, pr_2, p_{i_k}$ are the natural
projections. From this construction it follows that $Z_{i_1,\dots, i_k}$
is a smooth, projective variety of dimension $k$.

The Bott-Samelson variety $Z$ is equipped with a simple normal
crossing (SNC) divisor~$D_Z$. We recall next an explicit inductive
construction of this divisor, which will be needed later. If $Z=pt$, 
then $D_Z =\emptyset$. In general, $G/B$ is the projectivization 
$\Pbb(E)$ of a homogeneous rank-$2$ vector bundle $E\to G/P_k$, 
defined up to tensoring with a line bundle. Define $\cE:=E \otimes
\cO_E(1)$, a vector bundle over $G/B = \Pbb(E)$. Then we
have the Euler sequence of the projective bundle $\Pbb(E)$
\begin{equation}\label{E:seqZ}
\xymatrix{
0 \ar[r] & \cO_{\Pbb(E)} \ar[r] & \cE \ar[r] & \cQ \ar[r] & 0
}
\end{equation}
where $\cQ$ is the relative tangent bundle $T_{p_{i_k}}$.
Note that $\cE$ is independent of the specific choice of $E$, and 
$pr_2: G/B \times_{G/P_{i_k}} G/B \to G/B$, that is, the pull-back 
of $\Pbb(E)$ via $p_{i_k}$, may be identified with~$\Pbb(\cE)$. Let
$\cE':= (\theta')^*\cE$ and $\cQ':=(\theta')^*\cQ$, and pull-back the 
previous sequence via $\theta'$ to get an exact sequence 
\begin{equation}\label{E:seqZ'} 
\xymatrix{
0 \ar[r] & \cO_{Z'} \ar[r] &  \cE' \ar[r] & \cQ' \ar[r] &  0 \,. 
}
\end{equation} 
The inclusion $\cO_{Z'} \hookrightarrow \cE'$ gives a section
$\sigma:Z' \to Z$ of $\pi$ and therefore a divisor $D_k:=\sigma(Z') =
\Pbb(\cO_{Z'})$ in $\Pbb(\cE') =Z$. The SNC divisor on $Z$ is defined by
\[ 
D_Z = \pi^{-1}(D_{Z'}) \cup D_k 
\] 
where $D_{Z'}$ is the inductively constructed SNC divisor on $Z'$.
The following result is well known, see 
e.g., \cite[\S2.2]{brion.kumar:frobenius}:

\begin{prop}\label{prop:resolution} 
If $w$ is a reduced word, then the image of the composition 
$\theta= pr_1 \circ \theta_1: Z_{i_1,\dots,i_k} \to G/B$ is the Schubert 
variety $X(w)$. Moreover, $\theta^{-1}(X(w) \smallsetminus X(w)^\circ) =
D_{Z_{i_1,\dots,i_k}}$ and the restriction map
\[ 
\theta: Z_{i_1, \dots,i_k} \smallsetminus D_{Z_{i_1,\ldots,i_k}} \to X(w)^\circ 
\] 
is an isomorphism. 
\end{prop}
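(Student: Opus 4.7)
The plan is to proceed by induction on the length $k$ of the word, working directly from the inductive construction~\eqref{E:BSconst} of $Z=Z_{i_1,\dots,i_k}$ and of its boundary divisor $D_Z$. The base case $k=0$ is immediate: $Z$ is a point, $w=\id$, $X(w)=X(w)^\circ$ is a point, and $D_Z=\emptyset$. For the inductive step, write $w=w' s_{i_k}$ with $w'=s_{i_1}\cdots s_{i_{k-1}}$; the subword for $w'$ is automatically reduced, so the inductive hypothesis applies to $\theta':Z'\to X(w')$, and $\ell(w')=k-1<k=\ell(w)$.

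First I would identify $\theta(Z)=X(w)$. From the right-hand fiber square of~\eqref{E:BSconst}, the map $\theta=pr_1\circ\theta_1$ sends the $\Pbb^1$-fiber $\pi^{-1}(z')$ isomorphically onto the $\Pbb^1$-fiber $p_{i_k}^{-1}(p_{i_k}(\theta'(z')))$ in $G/B$. Therefore $\theta(Z)=X(w')\cdot P_{i_k}/B$. Since $\ell(w's_{i_k})>\ell(w')$, the standard product of Bruhat cells gives $Bw'B\cdot Bs_{i_k}B=Bw's_{i_k}B$, whence $X(w')\cdot P_{i_k}/B=X(w's_{i_k})=X(w)$.

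Next I would match $\theta^{-1}(X(w)\setminus X(w)^\circ)$ with $D_Z=\pi^{-1}(D_{Z'})\cup D_k$. The section $\sigma:Z'\to Z$ defining $D_k$ satisfies $pr_1\circ\theta_1\circ\sigma=\theta'$, so $\theta(D_k)=X(w')$, which has dimension $k-1$ and therefore lies in the boundary of $X(w)$. Also $\theta(\pi^{-1}(D_{Z'}))\subseteq X(w)$ has dimension at most $k-1$ and so is likewise contained in the boundary. Conversely, if $z\in Z\setminus D_Z$ and $z':=\pi(z)$, induction gives $\theta'(z')\in X(w')^\circ$; the $\Pbb^1$-fiber $\pi^{-1}(z')$ maps isomorphically onto $p_{i_k}^{-1}(p_{i_k}(\theta'(z')))$, which decomposes as the single point $\theta'(z')\in X(w')^\circ$ (the image of $\sigma$) together with its complement, which lies entirely in $X(w)^\circ$ because $\ell(w')<\ell(w)$. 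Since $z\notin D_k$, we conclude $\theta(z)\in X(w)^\circ$, hence $\theta^{-1}(X(w)^\circ)=Z\setminus D_Z$.

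Finally, the restriction $\theta:Z\setminus D_Z\to X(w)^\circ$ is a morphism of smooth varieties; by Zariski's main theorem it is enough to verify bijectivity. Every $p\in X(w)^\circ$ lies in a unique $\Pbb^1$-fiber of $p_{i_k}$ that meets $X(w')^\circ$ in exactly one point (again because $\ell(w')<\ell(w)$); the inductive isomorphism $\theta':Z'\setminus D_{Z'}\xrightarrow{\sim} X(w')^\circ$ together with the $\Pbb^1$-bundle structure of $\pi$ then produce a unique preimage of $p$ in $Z\setminus D_Z$. The main technical subtlety is controlling $\theta(\pi^{-1}(D_{Z'}))$ within the boundary of $X(w)$; this is where the reduced-word hypothesis is essential, since only then does multiplying a lower-dimensional stratum by $P_{i_k}$ stay strictly below the open cell $X(w)^\circ$.
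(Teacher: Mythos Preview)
The paper does not actually prove this proposition; it simply records it as well known and cites \cite[\S2.2]{brion.kumar:frobenius}. Your inductive argument is the standard one, and most of the steps are carried out correctly: the identification $\theta(Z)=X(w')\cdot P_{i_k}/B=X(w)$, the computation $\theta|_{D_k}=\theta'$, the ``converse'' direction showing $\theta(Z\smallsetminus D_Z)\subseteq X(w)^\circ$, and the bijectivity/Zariski's main theorem step are all fine.

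There is one genuine gap, which you yourself flag at the end. The sentence ``$\theta(\pi^{-1}(D_{Z'}))\subseteq X(w)$ has dimension at most $k-1$ and so is likewise contained in the boundary'' is not a valid inference: a closed subvariety of $X(w)$ of dimension $\le k-1$ can certainly meet the open cell $X(w)^\circ$ (e.g., any point of $\Abb^1\subset\Pbb^1$). What rescues the argument is $B$-equivariance, which you never invoke. All the maps in~\eqref{E:BSconst} are $B$-equivariant and the divisor $D_{Z'}$ is $B$-stable, so $\theta(\pi^{-1}(D_{Z'}))$ is a \emph{$B$-stable} closed subset of $X(w)$ of dimension $\le k-1$. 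Since $X(w)^\circ$ is a single $B$-orbit of dimension $k$, any $B$-stable subset either contains all of $X(w)^\circ$ or is disjoint from it; the dimension bound then forces disjointness, i.e., containment in the boundary. Alternatively, one can argue directly on Bruhat cells: for $z'\in D_{Z'}$ the inductive hypothesis gives $\theta'(z')\in X(v)^\circ$ for some $v<w'$, and then $\theta(\pi^{-1}(z'))\subseteq BvP_{i_k}/B\subseteq X(v)^\circ\cup X(vs_{i_k})^\circ$; since $v<w'$ implies $v\ne w$ and $vs_{i_k}\ne w's_{i_k}=w$, neither cell is $X(w)^\circ$. Either fix completes your proof.
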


Let $h_k:= c_1(\cO_{\cE'}(1)) \in H^2(Z)$. 
For later use we record next the class of the divisor~$D_k$ in~$Z$,
and the Chern classes of the relative tangent bundle 
$T_\pi= T_{\Pbb(\cE')|Z'}$ and of $T_Z$.

\begin{prop}\label{prop:divclass} 
The following identities hold in $H^*(Z)$:
\begin{itemize}
\item[(a)] $D_k = c_1(\pi^*(\cQ') \otimes \cO_{\cE'}(1)) \in H^*(Z)$;
\item[(b)] $h_k \cdot D_k = 0$;
\item[(c)] $c(T_\pi) = (1 + D_k) (1+h_k)$, and therefore 
\[ 
c(T_Z) = \pi^*(c(T_{Z'})) (1+h_k)(1+D_k) \,.
\]
\end{itemize}
\end{prop}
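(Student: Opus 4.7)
The plan is to verify (a), (b), (c) in order, each reducing to a standard projective-bundle calculation on the $\Pbb^1$-bundle $\pi\colon Z=\Pbb(\cE')\to Z'$. The main ingredients are the section $\sigma$ induced by $\cO_{Z'}\hookrightarrow\cE'$, the pulled-back sequence~\eqref{E:seqZ'}, and the tautological exact sequence on $\Pbb(\cE')$.

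For (a), I would exhibit $D_k$ as the zero scheme of a natural section of a line bundle. The composition of the tautological inclusion $\cO_{\cE'}(-1)\hookrightarrow \pi^*\cE'$ with the pulled-back surjection $\pi^*\cE'\to \pi^*\cQ'$ is a map $\cO_{\cE'}(-1)\to\pi^*\cQ'$, equivalently a section $s$ of $\pi^*\cQ'\otimes\cO_{\cE'}(1)$. This $s$ vanishes exactly where $\cO_{\cE'}(-1)\subseteq\ker(\pi^*\cE'\to\pi^*\cQ')=\pi^*\cO_{Z'}$, namely along $\sigma(Z')=D_k$, with simple vanishing (fiberwise, a single point of $\Pbb^1$). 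Hence $[D_k]=c_1(\pi^*\cQ'\otimes\cO_{\cE'}(1))$.

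Part (b) is a projection-formula argument. Since $\sigma$ arises from the sub-line bundle $\cO_{Z'}\hookrightarrow\cE'$, one has $\sigma^*\cO_{\cE'}(-1)=\cO_{Z'}$, and therefore $\sigma^*h_k=0$; combined with $D_k=\sigma_*[Z']$, this gives $h_k\cdot D_k=\sigma_*(\sigma^*h_k)=0$.

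For (c), I would use the relative Euler sequence of $\pi$,
\[
0\to \cO_{\cE'}(-1)\to \pi^*\cE'\to Q\to 0,
\]
which identifies $T_\pi\cong Q\otimes\cO_{\cE'}(1)$ as a line bundle (as $\cE'$ has rank two). Computing first Chern classes, $c_1(T_\pi)=c_1(Q)+h_k=c_1(\pi^*\cE')+2h_k$; pulling back~\eqref{E:seqZ'} gives $c_1(\pi^*\cE')=c_1(\pi^*\cQ')$, so by part~(a), $c_1(T_\pi)=h_k+[D_k]$. Using $h_k\cdot D_k=0$ from (b), this rewrites as $c(T_\pi)=(1+h_k)(1+D_k)$. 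The identity for $c(T_Z)$ then follows by multiplicativity of total Chern classes applied to the tangent sequence $0\to T_\pi\to T_Z\to \pi^*T_{Z'}\to 0$ of the smooth fibration $\pi$. The argument is routine; the only point requiring care is that the conventions for $\Pbb(\cE')$ match the paper's usage (i.e., $\Pbb(\cE')$ parametrizes sub-lines, as is tacit in the description of $\sigma$ and in the form~\eqref{E:seqZ} of the Euler sequence), so there is no real obstacle.
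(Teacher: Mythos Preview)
Your proof is correct and follows essentially the same approach as the paper: part~(a) is exactly the content of \cite[Ex.~3.2.17]{fulton:it} that the paper cites, and your argument for~(b) is the same restriction computation $h_k|_{D_k}=c_1(\cO_{\cO_{Z'}}(1))=0$ phrased via the projection formula. The only minor difference is in~(c): the paper tensors the tautological sequence by $\cO_{\cE'}(1)$ to obtain the Euler sequence $0\to\cO_Z\to\pi^*\cE'\otimes\cO_{\cE'}(1)\to T_\pi\to 0$ and reads off $c(T_\pi)=(1+h_k)(1+D_k)$ directly from the Chern roots $h_k$ and $D_k$ of the middle term, without invoking~(b); you instead compute $c_1(T_\pi)=h_k+D_k$ and then use~(b) to factor $1+h_k+D_k$ as $(1+h_k)(1+D_k)$. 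Both are routine and equivalent.
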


\begin{proof} 
(a) follows from the definition of $D_k$ and~\cite[Ex.~3.2.17]{fulton:it},
since $\cQ'=\cE'/\cO_{Z'}$.

(b) holds since $h_k|_{D_k}=c_1(\cO_{\cO_{Z'}}(1))=0$. 

To prove (c), note that by \eqref{E:seqZ'} the Chern roots of $\cE'$ 
are $0$ and $c_1(\cQ')$; it follows from~(a) that the Chern roots 
of $\pi^* {\cE'} \otimes \cO_{\cE'}(1)$ are $h_k$ and $D_k$. The Euler sequence
\[ 
\xymatrix{
0 \ar[r] & \cO_Z \ar[r] & \pi^*\cE' \otimes \cO_{\cE'}(1) \ar[r] & T_\pi
\ar[r] & 0
}
\] 
then implies $c(T_\pi) = c( \pi^*\cE' \otimes \cO_{\cE'}(1))=(1+h_k) (1+D_k)$.
The last statement follows from $c(T_Z)= \pi^* (c(T_{Z'}))c(T_\pi) $.
\end{proof}

%%%

\section{Chern-Schwartz-MacPherson classes of Schubert cells in $G/B$} \label{s:cam}

\subsection{CSM classes}\label{ss:cam} 
Let $Y$ be an algebraic variety over $\C$. Denote by $\cF(Y)$ the group of 
constructible functions on $Y$: the elements of $\cF(Y)$ are finite sums 
$\sum c_i \one_{W_i}$ where $c_i \in \Z$, $W_i \subseteq Y$ are locally closed 
subvarieties, and $\one_W$ denotes the characteristic function taking value $1$ 
on $p \in W$ and $0$ otherwise. If $f: Y \to X$ is a proper morphism of varieties, 
one can define a push-forward $f_*:\cF(Y) \to \cF(X)$ by setting
\[ 
f_*(\one_W)(p) = \chi (f^{-1}(p) \cap W)
\] 
for $W \subseteq Y$ a subvariety and $p\in X$, and extending by linearity 
to every $\varphi \in\cF(Y)$; this makes $\cF$ into a covariant functor. 
Here $\chi$ denotes the topological Euler characteristic.
MacPherson \cite{macpherson:chern} proved a conjecture of
Deligne and Grothendieck stating that there exists a
natural transformation $c_*: \cF \to H_*$ such that if $Y$ is
non-singular, then $c_*(\one_Y) = c(T_Y) \cap [Y]$.
The naturality of $c_*$ means that if $f:Y \to X$ is a 
proper morphism, then the following diagram commutes:
\[ 
\xymatrix@C=40pt{ 
\cF(Y) \ar[r]^{c_*} \ar[d]^{f_*} & H_*(Y) \ar[d]^{f_*} \\ 
\cF(X) \ar[r]^{c_*} & H_*(X) 
}
\] 
That is,
\begin{equation}\label{E:funCSM}
f_* (c_*(\varphi)) = c_*(f_*(\varphi))
\end{equation}
in $H_*(X)$, for all constructible functions $\varphi$.
Resolution of singularities and the normalization requirement easily
imply that $c_*$ is unique. 

If $Y$ is a compact complex variety, the class $c_*(\one_Y)$ coincides 
with a class defined earlier by M.~H.~Schwartz
\cite{schwartz:1,schwartz:2}; this class is the {\em
Chern-Schwartz-MacPherson\/} (CSM) class of $Y$. 
Taking $f$ to be a constant map, the commutativity of the above diagram
implies that $\int c_*(\one_{Y})=\chi(Y)$, so this class provides a natural
generalization of the Poincar\'e-Hopf theorem to possibly singular varieties.
Abusing language a little, we denote by $\csm(W) := c_*(1_W)\in H_*Y$ 
the CSM class of any constructible set $W$ in a variety $Y$; by additivity
of Euler characteristics, $\int \csm(W)=\chi(W)$.

Our main tool will be the observation that if $Z$ is a nonsingular variety 
and $W\subseteq Z$ is an open subvariety such that $Z\smallsetminus W$
is a SNC divisor with components~$D_i$, then
\begin{equation}\label{E:csmdefSNC}
\csm(W) = \frac{ c(T_Z)}{ \prod_i (1 + D_i)} \cap [Z]\in H_*Z
\end{equation}
(cf.~\cite[Proposition 15.3]{MR1893006}, \cite[Theorem~1]{aluffi:diff}).

In fact this observation may be used to extend the scope of the natural
transformation $c_*$ to arbitrary algebraically closed fields of characteristic~$0$, 
with values in the Chow group~$A_*$.
In this generality, $c_*$ may be constructed as follows. Every constructible 
function on $Y$ can be written as a linear combination of characteristic functions 
$\one_W$ for $W$ locally closed and non-singular in $Y$, so it suffices to 
describe $\csm(W)=c_*(\one_W)$ for such $W$. By resolution of singularities, 
there exists a desingularization $\pi: Z \to \overline{W}$ of the closure $\overline{W}$ 
of $W$ in $Y$ such that $D:=\pi^{-1}(\overline{W} \smallsetminus W)$ 
is a SNC divisor in $Z$. Then
one may take the push-forward of~\eqref{E:csmdefSNC} to $Y$ as the 
{\em definition\/} of $c_*(\one_W)$: one can show that over algebraically 
closed fields of characteristic~$0$ the resulting $c_*$ is independent of 
the choices and satisfies the Deligne-Grothendieck axioms mentioned 
above (\cite{MR2209219, MR2282409}).

\subsection{A recursive formula for CSM classes of Schubert cells} 
We will now apply identity~\eqref{E:csmdefSNC} to calculate the
Chern-Schwartz-MacPherson 
class of a Schubert cell $X(w)^\circ \subseteq G/B$. 
This class may be viewed as an element of $H_*(G/B)$, and in fact
of $H_*(X(w))$, and hence 
it can be written as an integer linear combination of classes $[X(v)]$
for $v\le w$ in the Bruhat order, as we observed in~\S\ref{ss:FmaSv}.
We will give an algorithm which yields this linear combination.
All the necessary
ingredients were developed in~\S\ref{ss:BS} and we keep the notation
of that section. In particular we recall the fiber diagram
\eqref{E:BSconst}:
\[ 
\xymatrix@C=50pt{
Z \ar[r]^-{\theta_1} \ar[d]_{\pi} & G/B \times_{G/P_{i_k}} G/B 
\ar[r]^-{pr_1}\ar[d]^{pr_2} & G/B \ar[d]^{p_{i_k}} \\ 
Z'\ar[r]^{\theta'} & G/B \ar[r]^{p_{i_k}} & G/P_{i_k}
} 
\] 

Let $s_{i_1}\cdots s_{i_k}$ be any word (reduced or otherwise), and let
$Z:=Z_{i_1,\dots, i_k}$ be the corresponding Bott-Samelson variety.
Recall from~\S\ref{ss:BS} that $G/B \times_{G/P_{i_k}} G/B=\Pbb(\cE)$
for a canonically defined vector rank-$2$ vector bundle $\cE$ on $G/B$; 
thus $Z=\Pbb(\cE')$ is the projectivization of the pull-back 
$\cE'={\theta'}^*(\cE)$. In $H^2(Z_{i_1,\dots, i_k})$ we have the 
tautological class $h_k=c_1(\cO_{\cE'}(1))$, as well as the
pull-backs of $h_j$ from $Z_{i_1,\dots,i_j}$ for $j< k$; we will omit
the pull-back notation.

Let $D_Z$ be the SNC divisor defined in~\S\ref{ss:BS}, and denote by
$Z^\circ=Z_{i_1,\dots, i_k}^\circ$ the complement $Z\smallsetminus D_Z$.
By \eqref{E:csmdefSNC},
\begin{equation}\label{E:CSMZcirc}
\csm(Z^\circ) = 
\frac{ c(T_Z)}{(1+D_1) \cdots (1+D_k)}\cap [Z]
\end{equation}
where $D_1, \dots, D_k$ are the components of $D_Z$. 

\begin{lemma}\label{lem:CSMinBS}
With notation as above, the following holds in $H_*(Z)$:
\[ 
\csm(Z^\circ) =(1+h_k)\cdot \pi^*\big(\csm({Z'}^\circ)\big)
= \prod_{j=1}^k (1+h_j)\cdot [Z]\/. 
\]
\end{lemma}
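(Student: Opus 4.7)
The plan is to combine the SNC-divisor formula~\eqref{E:CSMZcirc} with the inductive description of $T_Z$ and $D_Z$ furnished in~\S\ref{ss:BS}. First, I would observe that the inductive construction $D_Z = \pi^{-1}(D_{Z'}) \cup D_k$ identifies the components of $D_Z$ other than $D_k$ with $\pi$-pullbacks of the components $D'_1,\dots,D'_{k-1}$ of $D_{Z'}$. Hence the denominator in~\eqref{E:CSMZcirc} factors as $\pi^*\!\bigl(\prod_{j<k}(1+D'_j)\bigr)\cdot(1+D_k)$.

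Next I would substitute Proposition~\ref{prop:divclass}(c), $c(T_Z) = \pi^*(c(T_{Z'}))(1+h_k)(1+D_k)$, into the numerator. The factor $(1+D_k)$ cancels against the corresponding factor in the denominator, leaving
\[
\csm(Z^\circ) \;=\; (1+h_k)\cdot \pi^*\!\left(\frac{c(T_{Z'})}{\prod_{j<k}(1+D'_j)}\right)\cap [Z].
\]
Because $\pi$ is smooth (a $\Pbb^1$-bundle), flat pullback satisfies $\pi^*[Z']=[Z]$, and the cap product is compatible with pullback: $\pi^*\alpha \cap [Z] = \pi^*(\alpha\cap [Z'])$ for any $\alpha\in H^*(Z')$. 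Applied to $\alpha = c(T_{Z'})/\prod_{j<k}(1+D'_j)$ and invoking~\eqref{E:CSMZcirc} one step lower, this yields $\pi^*(\csm({Z'}^\circ))$, giving the first equality.

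The second equality then follows by induction on the length $k$. The base case $k=0$ is trivial, as $Z$ is a point and the empty product equals $1$. For the inductive step I would apply $(1+h_k)\cdot\pi^*(-)$ to the inductive hypothesis $\csm({Z'}^\circ)=\prod_{j<k}(1+h_j)\cap [Z']$ and again use $\pi^*\alpha\cap [Z]=\pi^*(\alpha\cap [Z'])$, remembering the convention from~\S\ref{ss:BS} that $h_j$ for $j<k$ is identified with its pullback to $Z$. The only point requiring care is the identification of the components $D_j$ ($j<k$) as pullbacks from $D_{Z'}$; this is immediate from the set-theoretic description of $D_Z$, and the rest of the argument is cancellation packaged in Proposition~\ref{prop:divclass} together with the standard projection/pullback identity.
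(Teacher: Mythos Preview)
Your argument is correct and is precisely the approach the paper takes: substitute Proposition~\ref{prop:divclass}(c) into~\eqref{E:CSMZcirc}, cancel the $(1+D_k)$ factor, and recognize what remains as $(1+h_k)\cdot\pi^*(\csm({Z'}^\circ))$, then iterate. The paper's proof is simply a two-line summary of exactly these steps; you have spelled out the pullback compatibilities and the induction that the paper leaves implicit.
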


\begin{proof}
The first formula follows from \eqref{E:CSMZcirc} and 
Proposition~\ref{prop:divclass} (c), and the second formula is
an immediate consequence. 
\end{proof}

Now let $X(w)$ be the Schubert variety determined by $w\in W$, 
that is, the closure of $X(w)^\circ$ in $G/B$, and fix a reduced 
decomposition $s_{i_1}\cdots s_{i_k}$ of $w$ and the corresponding 
Bott-Samelson variety $Z$. As recalled in
Proposition~\ref{prop:resolution}, the composition $pr_1\circ
\theta_1$ gives a proper birational morphism (hence a 
desingularization) $\theta: Z\to X(w)$, restricting to an isomorphism
on $\theta^{-1}(X(w)^\circ) =Z^\circ$.

\begin{lemma}\label{lemma:1rec} 
The CSM class of the Schubert cell $X(w)^\circ$ is given by 
\[ 
\csm(X(w)^\circ) = \theta_*\bigl((1 + h_k)\cdot \pi^*(\csm({Z'}^\circ))
\bigr) \/.
\] 
\end{lemma}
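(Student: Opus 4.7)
The plan is to combine two ingredients: the functoriality of the CSM natural transformation, and the formula for $\csm(Z^\circ)$ already established in Lemma~\ref{lem:CSMinBS}.

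First I would invoke Proposition~\ref{prop:resolution}, which tells us that for a reduced word $s_{i_1}\cdots s_{i_k}$ representing $w$, the map $\theta\colon Z \to X(w)$ is proper, and its restriction $\theta\colon Z^\circ \to X(w)^\circ$ is an isomorphism (with $Z^\circ = Z\smallsetminus D_Z$). In particular, for every point $p \in X(w)^\circ$ the fiber $\theta^{-1}(p)\cap Z^\circ$ is a single point, while for $p\in X(w)\smallsetminus X(w)^\circ$ this intersection is empty. Consequently, at the level of constructible functions,
\[
\theta_*(\one_{Z^\circ}) = \one_{X(w)^\circ}
\]
as functions on $X(w)$ (and, by extension by zero, as functions on $G/B$).

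Next I would apply the naturality relation~\eqref{E:funCSM} of the CSM transformation to the proper map $\theta$ and the constructible function $\varphi = \one_{Z^\circ}$:
\[
\theta_*\bigl(\csm(Z^\circ)\bigr) = \theta_*\bigl(c_*(\one_{Z^\circ})\bigr) = c_*\bigl(\theta_*(\one_{Z^\circ})\bigr) = c_*(\one_{X(w)^\circ}) = \csm(X(w)^\circ)\/.
\]
Finally, substituting the formula $\csm(Z^\circ) = (1+h_k)\cdot \pi^*\bigl(\csm({Z'}^\circ)\bigr)$ from Lemma~\ref{lem:CSMinBS} yields the claimed identity.

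There is essentially no obstacle: the statement is a direct combination of functoriality with the preceding lemma. The only point that requires a moment of care is the identification $\theta_*(\one_{Z^\circ}) = \one_{X(w)^\circ}$, and this is an immediate consequence of Proposition~\ref{prop:resolution} together with the definition of the pushforward of constructible functions via fiberwise Euler characteristics (the fibers being either a point or empty).
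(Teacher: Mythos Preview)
Your proposal is correct and follows essentially the same approach as the paper's proof: both argue that $\theta_*(\one_{Z^\circ})=\one_{X(w)^\circ}$, invoke the functoriality of CSM classes~\eqref{E:funCSM} to obtain $\csm(X(w)^\circ)=\theta_*(\csm(Z^\circ))$, and then substitute the formula from Lemma~\ref{lem:CSMinBS}. You have simply spelled out the fiber computation behind the constructible-function pushforward in slightly more detail than the paper does.
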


\begin{proof}
By construction, $\theta_*(\one_{Z^\circ})=\one_{X(w)^\circ}$; therefore, the 
functoriality of CSM classes \eqref{E:funCSM} implies $\csm(X(w)^\circ) 
=\theta_* \big(\csm(Z^\circ)\big)$, and the stated formula then follows from 
Lemma~\ref{lem:CSMinBS}.
\end{proof}

Lemma~\ref{lemma:1rec} motivates the study of the quantity 
$\theta_*\big((1+ h_k)\cdot \pi^*(\gamma)\big)$
for $\gamma \in H_*(Z')$. 
The next theorem gives the key formulas needed for explicit calculations,
in terms of the operators introduced in~\S\ref{ss:twoops}.

\begin{theorem}\label{thm:recursion} 
Let $\gamma \in H_*(Z')$. Then the following holds in $H_*(G/B)$: 
\begin{itemize}
\item[(a)] $\theta_*(\pi^*(\gamma)) = 
\partial_{i_k}( \theta'_*(\gamma))$;
\item[(b)] $\theta_*(h_k\cdot \pi^*(\gamma)) = 
-s_{i_k} (\theta'_*(\gamma))$.
\end{itemize}
Therefore, 
\[ 
\theta_*((1+h_k)\cdot \pi^*(\gamma)) = \cT_{i_k}(\theta'_*(\gamma))
\] 
where $\cT_i: H^*(G/B) \to H^*(G/B)$ is the operator given by 
$\cT_i = \partial_i - s_i$.
\end{theorem}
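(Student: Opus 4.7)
My strategy is to prove (a) and (b) separately, then add them for the $\cT_{i_k}$ formula. Both parts reduce to computations on $G/B\times_{G/P_{i_k}}G/B$ via the factorization $\theta = pr_1\circ\theta_1$. The crucial observation is that both squares in~\eqref{E:BSconst} are fiber squares with flat vertical $\Pbb^1$-bundle projections, yielding two flat base change identities: $(\theta_1)_*\pi^* = pr_2^*\theta'_*$ on the left, and $(pr_1)_*pr_2^* = p_{i_k}^*(p_{i_k})_* = \partial_{i_k}$ on the right (the latter being the very definition of the BGG operator). Chaining these immediately proves (a):
\[
\theta_*(\pi^*\gamma) \;=\; (pr_1)_*(\theta_1)_*\pi^*\gamma \;=\; (pr_1)_*pr_2^*\theta'_*\gamma \;=\; \partial_{i_k}(\theta'_*\gamma).
\]

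For (b), set $\delta := \theta'_*\gamma$. Since $Z = \Pbb(\cE')$ is the pullback of $\Pbb(\cE) = G/B\times_{G/P_{i_k}}G/B$ along $\theta'$, we have $h_k = \theta_1^*\tilde h$ for $\tilde h := c_1(\cO_\cE(1))$, and the projection formula together with left-square base change yields $\theta_*(h_k\pi^*\gamma) = (pr_1)_*(\tilde h\cdot pr_2^*\delta)$. To compute this, I would decompose $\tilde h$ using the relative diagonal $\Delta: G/B\hookrightarrow G/B\times_{G/P_{i_k}}G/B$, which is a common section of $pr_1$ and $pr_2$ corresponding to the tautological inclusion $\cO_{G/B}\hookrightarrow\cE$ (whose quotient is $\cE/\cO_{G/B}\cong T_{p_{i_k}}$ by the Euler sequence on $G/B = \Pbb(E)$). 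The argument of Proposition~\ref{prop:divclass}(a), applied in this setting, gives $[\Delta] = \tilde h + pr_2^*c_1(T_{p_{i_k}})$, so $\tilde h = [\Delta] - pr_2^*c_1(T_{p_{i_k}})$. Splitting and using $pr_1\circ\Delta = pr_2\circ\Delta = \id$ on the diagonal term, plus right-square base change on the other, one obtains
\[
\theta_*(h_k\pi^*\gamma) \;=\; \delta - \partial_{i_k}\bigl(c_1(T_{p_{i_k}})\cdot\delta\bigr).
\]

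To recognize this as $-s_{i_k}(\delta)$, I would note that $T_{p_{i_k}}\cong\caL_{\alpha_{i_k}}$ as $G$-equivariant line bundles: both have fiber with $T$-weight $-\alpha_{i_k}$ at $eB$ (the root space $\mathfrak{g}_{-\alpha_{i_k}}\subset\mathfrak{p}_{i_k}/\mathfrak{b}$ in one case, $\C_{-\alpha_{i_k}}$ in the other). Then applying the Leibniz rule of Proposition~\ref{prop:BGGop}(b) with $\partial_{i_k}(c_1(\caL_{\alpha_{i_k}})) = \langle\alpha_{i_k},\alpha_{i_k}^\vee\rangle = 2$ yields $\partial_{i_k}(c_1(\caL_{\alpha_{i_k}})\delta) = 2\delta - c_1(\caL_{\alpha_{i_k}})\partial_{i_k}(\delta)$, so the expression becomes $-\delta + c_1(\caL_{\alpha_{i_k}})\partial_{i_k}(\delta)$, which is $-s_{i_k}(\delta)$ by~\eqref{E:sk}. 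Summing (a) and (b) gives the theorem. I anticipate the main obstacle to be the geometric step in (b) — decomposing $\tilde h$ via $[\Delta]$ and identifying $\cE/\cO_{G/B}$ with $T_{p_{i_k}}$, then $T_{p_{i_k}}$ with $\caL_{\alpha_{i_k}}$ — since signs and conventions must be tracked carefully to land on the correct form of~\eqref{E:sk}.
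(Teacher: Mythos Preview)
Your proof is correct, and part (a) is identical to the paper's argument. For part (b) you follow the same overall architecture as the paper --- reduce to $(pr_1)_*(\tilde h\cdot pr_2^*\delta)$ via projection formula and left-square base change, then expand using the Leibniz rule for $\partial_{i_k}$ and recognize~\eqref{E:sk} --- but you decompose $\tilde h$ differently. The paper writes $\tilde h = pr_1^*\eta - pr_2^*\eta$ with $\eta = c_1(\cO_E(1))$ (via \cite[B.5.5]{fulton:it}) and uses the fact that $\partial_{i_k}(\eta)=1$, which lets it avoid naming the relative tangent bundle explicitly. You instead write $\tilde h = [\Delta] - pr_2^*c_1(T_{p_{i_k}})$ via the diagonal section, then invoke the identification $T_{p_{i_k}}\cong\caL_{\alpha_{i_k}}$ and $\partial_{i_k}(c_1(\caL_{\alpha_{i_k}}))=2$. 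Your route is arguably more intrinsic (it does not reference the non-canonical bundle $E$), and the diagonal computation $(pr_1)_*([\Delta]\cdot pr_2^*\delta)=\delta$ is clean; the price is the extra identification $T_{p_{i_k}}\cong\caL_{\alpha_{i_k}}$, which is true but requires the representation-theoretic check you sketch. Both computations land on $-\delta + c_1(\caL_{\alpha_{i_k}})\partial_{i_k}(\delta)$ and conclude identically.
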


Before proving the theorem, we note that if $\gamma=\csm({Z'}^\circ)$,
then $\theta'_*(\gamma) = \csm(X(w')^\circ)$ where $w' =s_{i_1} \cdots 
s_{i_{k-1}}= w s_{i_k}$. Therefore Theorem~\ref{thm:recursion} gives a 
recursive formula to calculate the CSM classes:

\begin{corol}\label{cor:recursion} 
Let $w \in W$ be a non-identity element and let $s_k$ be a simple reflection 
such that $\ell(ws_k) < \ell(w)$. Then the following recursive identity 
holds: 
\[
\csm(X(w)^\circ) = \cT_k(\csm(X(ws_k)^\circ)) \/, 
\] 
with the initial condition that $\csm(X(\id)^\circ)= \csm(pt) = [pt]$.
\end{corol}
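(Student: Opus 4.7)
The plan is to obtain this recursion as an essentially immediate consequence of Lemma~\ref{lemma:1rec} combined with Theorem~\ref{thm:recursion}, once we arrange the Bott-Samelson resolution so that its last factor is $s_k$. Since the hypothesis $\ell(ws_k)<\ell(w)$ is equivalent to the existence of a reduced decomposition of $w$ of the form $w=s_{i_1}\cdots s_{i_{k-1}}s_k$, I would begin by fixing such a reduced word, and letting $Z=Z_{i_1,\dots,i_{k-1},k}$ be the corresponding Bott-Samelson variety. With this choice, the ``truncated'' Bott-Samelson variety $Z'=Z_{i_1,\dots,i_{k-1}}$ is a resolution of the Schubert variety $X(w')$, where $w'=ws_k$, and the map $\theta':Z'\to X(w')$ restricts to an isomorphism between $(Z')^\circ$ and $X(w')^\circ$.

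Next I would apply the functoriality of CSM classes to the birational map $\theta':Z'\to X(w')$: since $\theta'_*(\one_{(Z')^\circ})=\one_{X(w')^\circ}$, equation~\eqref{E:funCSM} gives
\[
\theta'_*\bigl(\csm((Z')^\circ)\bigr)=\csm(X(w')^\circ)=\csm(X(ws_k)^\circ).
\]
Lemma~\ref{lemma:1rec}, applied to the reduced word above, then asserts
\[
\csm(X(w)^\circ)=\theta_*\bigl((1+h_k)\cdot\pi^*(\csm((Z')^\circ))\bigr).
\]
At this point I would invoke Theorem~\ref{thm:recursion} with $\gamma=\csm((Z')^\circ)$, which says exactly that
\[
\theta_*\bigl((1+h_k)\cdot\pi^*(\gamma)\bigr)=\cT_{i_k}\bigl(\theta'_*(\gamma)\bigr).
\]
Substituting and using $i_k=k$ yields $\csm(X(w)^\circ)=\cT_k(\csm(X(ws_k)^\circ))$, which is the asserted recursion.

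Finally, the initial condition is immediate: $X(\id)^\circ=B/B$ is a single point, so $\csm(X(\id)^\circ)=c_*(\one_{\{pt\}})=[pt]$ by the definition of $c_*$ on smooth compact varieties. There is essentially no obstacle here beyond making sure that the reduced-word choice is compatible with the Bott-Samelson construction of Section~\ref{ss:BS}; the substantive content is already packaged into Theorem~\ref{thm:recursion}. The only subtlety worth a line of commentary is that the class $\csm(X(w)^\circ)$ produced on the left does not depend on the reduced decomposition used on the right (it is intrinsically defined), so the recursion is well-posed even though different reduced words ending in $s_k$ could in principle be used.
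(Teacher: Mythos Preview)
Your argument is correct and is essentially the same as the paper's: the paper observes (in the paragraph immediately preceding the corollary) that taking $\gamma=\csm({Z'}^\circ)$ in Theorem~\ref{thm:recursion} gives $\theta'_*(\gamma)=\csm(X(ws_{i_k})^\circ)$, and the corollary follows at once from Lemma~\ref{lemma:1rec}. The only quibble is notational: you use $k$ simultaneously for the index of the simple reflection $s_k$ and (implicitly) for the length of the reduced word, which need not coincide; writing the decomposition as $w=s_{i_1}\cdots s_{i_\ell}$ with $i_\ell=k$ and $\ell=\ell(w)$ would avoid the clash.
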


The explicit action of the operator $\cT_k$ on Schubert classes $[X(u)]$
is obtained by combining identities \eqref{E:partial} and
\eqref{E:raction} above. The resulting formula together with other
properties of the operator $\cT_k$ will be presented in~\S\ref{s:propTk}
below.

\begin{proof}[Proof of Theorem~\ref{thm:recursion}] 
Both the left and right squares 
in~\eqref{E:BSconst} are fiber squares, and
$p_{i_k}$ is flat and $\theta=pr_1\theta_1$ is proper, so
\[
\theta_*\pi^*(\gamma) = p_{i_k}^* (p_{i_k})_* \theta'_*(\gamma)
=\partial_{i_k}(\theta'_*(\gamma))
\]
by~\cite[Proposition 1.7]{fulton:it} and the definition of $\partial_{i_k}$
given in~\S\ref{ss:twoops}. This proves (a).

For (b), 
let $\underline\gamma:=\theta'_*(\gamma) \in H^*(G/B)$ and 
\[ 
\tilde{h}_k=c_1(\cO_{\cE}(1)) \in H^2(G/B \times_{G/P_{i_k}} G/B) \/, 
\] 
so that $h_k= \theta_1^*(\tilde{h}_k)$. Then
\begin{align*} 
\theta_*(h_k\cdot \pi^*(\gamma)) 
&= (pr_1)_*(\theta_1)_*(\theta_1^*(\tilde{h}_k)\cdot \pi^*(\gamma)) \\
&= (pr_1)_* (\tilde{h}_k\cdot (\theta_1)_* \pi^*(\gamma)) \\ 
&= (pr_1)_* (\tilde{h}_k\cdot pr_2^* (\theta')_*(\gamma)) \\
&= (pr_1)_* (\tilde{h}_k\cdot pr_2^*(\underline\gamma)) \/.
\end{align*}
In the second equality we used the projection formula, and in the
third we used the fact that the left square in~\eqref{E:BSconst} is a fiber 
square and that $pr_2$ is flat and $\theta'$ is proper. 
Now recall that $G/B$ is the projectivization $\Pbb(E)$ of a vector
bundle $E$ over $G/P_{i_k}$, and $\cE$ is $p_{i_k}^*(E)\otimes \cO_E(1)$
as a bundle over $G/B$. We can compute the tautological subbundle 
$\cO_{\cE}(-1)$ of $pr_2^*(\cE)$, a bundle over $G/B \times_{G/P_{i_k}} G/B$,
by using \cite[Appendix B.5.5]{fulton:it}:
\[
\cO_{\cE}(-1) = \cO_{p_{i_k}^*(E) \otimes \cO_E(1)}(-1) 
= pr_2^* \cO_{p_{i_k}^* (E)}(-1) \otimes pr_2^* \cO_E(1)
=pr_1^*\cO_E(-1) \otimes pr_2^* \cO_E(1)\/.
\] 
Letting $\eta=c_1(\cO_E(1))$, this implies
\[ 
\tilde{h}_k = c_1(pr_1^*\cO_E(1)) + c_1(pr_2^*\cO_E(-1))
= pr_1^*(\eta) - pr_2^*(\eta) \/,
\] 
and the projection formula gives
\begin{align*}
(pr_1)_* (\tilde{h}_k\cdot pr_2^*(\underline\gamma)) 
&= (pr_1)_*\big((pr_1^*(\eta) - pr_2^*(\eta)) 
\cdot pr_2^*(\underline\gamma)\big) \\ 
&=\eta\cdot (pr_1)_*pr_2^*(\underline\gamma) 
-(pr_1)_*pr_2^* (\eta\cdot \underline\gamma) \\
&=\eta\cdot p_{i_k}^* (p_{i_k})_*(\underline\gamma) 
-p_{i_k}^* (p_{i_k})_* (\eta\cdot \underline\gamma)
\end{align*}
where the last equality follows since the second square in
\eqref{E:BSconst} is also a fiber square and $p_{i_k}$ is
both flat and proper. By definition, $\partial_{i_k}=p_{i_k}^* 
(p_{i_k})_*$. Putting all together, we have shown that
\[
\theta_*(h_k\cdot \pi^*\gamma)
=\eta\cdot \partial_{i_k} (\underline\gamma)
-\partial_{i_k}(\eta\cdot \underline\gamma)\/.
\]
Since $\cO_E(1)$ has degree $1$ on the fibers of
$p_{i_k}$, and $p_{i_k}$ has relative dimension~$1$, we have
\[
\partial_{i_k}(\eta)=p_{i_k}^* (p_{i_k})_* (\eta)=[G/B]\/.
\]
We use this and part (b) of Proposition~\ref{prop:BGGop} to get
\begin{align*}
\partial_{i_k}(\eta\cdot \underline\gamma) 
&= \partial_{i_k}(\eta)\cdot \underline\gamma
+\eta\cdot \partial_{i_k}(\underline\gamma)-c_1(\caL_{\alpha_{i_k}})
\cdot\partial_{i_k}(\eta)\cdot \partial_{i_k}(\underline\gamma) \\
&= \underline\gamma
+\eta\cdot \partial_{i_k}(\underline\gamma)-c_1(\caL_{\alpha_{i_k}})
\cdot\partial_{i_k}(\underline\gamma)
\end{align*}
and finally
\[
\theta_*(h_k\cdot \pi^*\gamma)
=-\underline\gamma+c_1(\caL_{\alpha_{i_k}})\cdot
\partial_{i_k}(\underline\gamma) 
=(-\id + c_1(\caL_{\alpha_{i_k}})) (\underline\gamma)
=-s_{i_k} (\underline\gamma)
\]
by \eqref{E:sk}, concluding the proof of (b).
\end{proof}

\subsection{Chern classes of Schubert cells in $G/P$}\label{ss:GmodP}
Fix a parabolic subgroup $P \subset G$ containing the Borel subgroup $B$. 
Let $W_P \subseteq W$ be the subgroup generated by the simple reflections 
in $P$. It is known (see e.g., \cite[\S 1.10]{humphreys:reflection})
that each coset in $W/W_P$ has a unique minimal length
representative; we denote by $W^P$ the set of these representatives. 
If $w \in W$, then one can define a length function $\ell: W/W_P \to \Nbb$
by $\ell(wW_P) := \ell(w')$ where $w' \in W^P$ is in the coset of $w$.

The space $G/P$ is a projective manifold of dimension $\ell (w_0 W_P)$,
where $w_0$ is the longest element in $W$. For each $w \in W^P$ there is 
a Schubert cell $X(wW_P)^\circ := B w P/P$ of dimension $\ell(wW_P)$, 
and the corresponding Schubert variety $X(wW_P):=\overline{B w P/P}$;~see 
e.g., \cite[\S 2.6]{billey.lakshmibai}. The fundamental classes $[X(wW_P)] 
\in H_{2 \ell(wW_P)}(G/P)$ ($w \in W^P$) form a $\Z$-basis for the homology 
$H_*(G/P)$. The natural projection $p: G/B \to G/P$ satisfies $p(X(w)) = 
X(wW_P)$ and the induced map in homology is given by
\begin{equation}\label{E:pf} 
p_* [X(w)] = 
\begin{cases}
[X(wW_P)] & \textrm{if } \ell(w) = \ell(wW_P); \\ 
0 & \textrm{otherwise } \/. 
\end{cases} 
\end{equation} 

\begin{prop}\label{prop:pfcsm}
With notation as above, 
\begin{equation}\label{E:pfcsm}
\csm(X(wW_P)^\circ) = p_*(\csm(X(w)^\circ)) \in H_*(G/P)
\end{equation}
for all $w\in W$. Further, if $u\le w$ and $\ell(u)=\ell(uW_P)$, then the coefficient of $[X(u)]$ in 
$\csm(X(w)^\circ)$ equals the coefficient of $[X(uW_P)]$ in $\csm(X(wW_P)^\circ)$.
\end{prop}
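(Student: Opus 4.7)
The plan is to deduce both claims from the functoriality of CSM classes applied to the proper projection $p:G/B\to G/P$, combined with the fact that $p$ restricts to an affine-fiber map on each Schubert cell of $G/B$. First I would prove~\eqref{E:pfcsm} by establishing the pointwise identity $p_*(\one_{X(w)^\circ})=\one_{X(wW_P)^\circ}$ in $\cF(G/P)$. Factor $w=w'v$ with $w'\in W^P$ the minimum-length representative of the coset $wW_P$ and $v\in W_P$, so that $\ell(w)=\ell(w')+\ell(v)$ and $X(wW_P)^\circ=X(w'W_P)^\circ$. The restriction $p|_{X(w)^\circ}:X(w)^\circ\to X(wW_P)^\circ$ is a $B$-equivariant surjection between $B$-orbits, and a direct Bruhat-decomposition computation of the fiber over the base point $w'P$ identifies it with the Schubert cell of $P/B$ indexed by $v\in W_P$, that is, with $\C^{\ell(v)}$. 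Every fiber therefore has Euler characteristic $1$, which gives the claimed identity of constructible functions. Functoriality~\eqref{E:funCSM} then yields
\[
p_*(\csm(X(w)^\circ))=c_*(p_*(\one_{X(w)^\circ}))=c_*(\one_{X(wW_P)^\circ})=\csm(X(wW_P)^\circ)\/.
\]

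For the second assertion I would expand $\csm(X(w)^\circ)=\sum_{v\le w}c(v;w)[X(v)]$ and apply $p_*$ termwise using~\eqref{E:pf}: the vanishing clause kills every summand with $v\notin W^P$ (equivalently, with $\ell(v)>\ell(vW_P)$), leaving
\[
p_*(\csm(X(w)^\circ))=\sum_{v\le w,\,v\in W^P}c(v;w)\,[X(vW_P)]\/.
\]
Distinct elements of $W^P$ lie in distinct cosets, so the classes $[X(vW_P)]$ appearing here are pairwise distinct Schubert-basis elements of $H_*(G/P)$. Combining this with~\eqref{E:pfcsm} to identify the left-hand side with $\csm(X(wW_P)^\circ)$ and reading off the coefficient of $[X(uW_P)]$ for a given $u\in W^P$ with $u\le w$ produces precisely the equality asserted in the proposition.

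The principal obstacle is the fiber identification in the first step: one must verify that $p|_{X(w)^\circ}$ has each fiber isomorphic to $\C^{\ell(v)}$, not merely that the generic fiber has the right dimension. This is classical Lie-theoretic bookkeeping that already underlies the pushforward formula~\eqref{E:pf} for Schubert fundamental classes (itself equivalent to the case $\ell(v)=0$, where the restriction is birational, plus the positive-dimensional-fiber case), but it is the one nontrivial input; the remainder of the argument is formal manipulation of the functoriality of $c_*$ together with linear independence of Schubert classes in $H_*(G/P)$.
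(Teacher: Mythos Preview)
Your proposal is correct and follows the same overall strategy as the paper: push forward the constructible function $\one_{X(w)^\circ}$ via $p$, invoke functoriality of $c_*$, then read off coefficients using~\eqref{E:pf}. The one notable difference is in how you establish that the fiber Euler characteristic equals~$1$. You identify each fiber explicitly as the affine cell $\C^{\ell(v)}$ in $P/B$, and you correctly flag this identification as the only nontrivial input. The paper sidesteps this computation entirely: it uses only that the Euler characteristic $\chi$ of the fibers is \emph{constant} (immediate from $B$-equivariance, since $X(wW_P)^\circ$ is a single $B$-orbit), so that $p_*(\csm(X(w)^\circ))=\chi\cdot\csm(X(wW_P)^\circ)$, and then pins down $\chi=1$ by observing that the coefficient of $[pt]$ on both sides equals~$1$. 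This trick eliminates precisely the Lie-theoretic bookkeeping you singled out as the principal obstacle; your route, on the other hand, yields more geometric information about the restricted map.
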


\begin{proof}
The topological Euler characteristic $\chi$ of the fibers of the
restriction of $p$ to $X(w)^\circ$ is constant, hence the push-forward
$p_* (\one_{X(w)^\circ})$ equals $\chi \cdot \one_{X(w
  W_P)^\circ}$. By functoriality of CSM classes \eqref{E:funCSM} this
implies that $p_*(\csm(X(w)^\circ)) = \chi \cdot
\csm(X(wW_P)^\circ)$. Since the coefficient of $[pt]$ in both CSM
classes equals $1$, it follows that $\chi =1$.  The last claim follows
from~\eqref{E:pf}.
\end{proof}

Thus the CSM classes of Schubert cells in $G/P$ are determined by 
the corresponding classes in $G/B$.
For example, the CSM classes of Schubert cells in the ordinary
Grassmannian, determined explicitly in~\cite{aluffi.mihalcea:csm},
can also be computed in principle using the recursive formula obtained
in Corollary~\ref{cor:recursion}; see Example~\ref{ex:Fl4} for a concrete example.
Further, the push-forward formula~\eqref{E:pf} implies that if the positivity
conjecture discussed in~\S\ref{s:posCSM} is true for the CSM classes of 
Schubert cells in $G/B$, then the analogous conjecture must be true for 
CSM classes of Schubert cells in $G/P$ for any parabolic $P$ containing $B$.

%%%

\section{The operators $\cT_k$ and a Weyl group representation 
on $H^*(G/B)$}\label{s:propTk} 
In this section we analyze the operator $\cT_k = \partial_k - s_k: 
H_*(G/B) \mapsto H_*(G/B)$ which gives the recursion for CSM classes
of Schubert cells as proven in~Corollary~\ref{cor:recursion}. 
We start by recording the main algebraic properties of the operators 
$\cT_k$.

\begin{prop}\label{prop:T} The following identities hold:
\begin{itemize}
\item[(a)] $\cT_k^2 = 1$. 
\item[(b)] The operators $\cT_k$ satisfy the braid relations, 
i.e., $(\cT_i \cT_j)^{m_{i,j}} = 1$ where $m_{i,j}$ is 
the order of the element $s_i s_j \in W$.
Also, if $w=s_{i_1}\cdots s_{i_k}$ is a representation of an element 
$w\in W$ as a word in simple reflections, then the operator
$\cT_w:= \cT_{i_1}\cdots \cT_{i_k}$ is independent of the choice
of the word representing $w$.
\item[(c)] For any $u, v \in W$, $\cT_u \cdot \cT_v 
= \cT_{uv}$. 
\end{itemize}
\end{prop}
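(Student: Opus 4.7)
The plan is to dispatch (a) by direct algebra, use (a) to upgrade Corollary~\ref{cor:recursion} to the unconditional form of Theorem~\ref{T:mainintro}, deduce the braid relations from this upgrade by acting on the CSM basis, and finally obtain the well-definedness of $\cT_w$ and part (c) from the Coxeter presentation of $W$.

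For (a), I would expand
\[
\cT_k^2 = (\partial_k - s_k)^2 = \partial_k^2 - (\partial_k s_k + s_k \partial_k) + s_k^2
\]
and invoke $\partial_k^2 = 0$ (Proposition~\ref{prop:BGGop}(a)), $s_k^2 = \id$, and $\partial_k s_k + s_k \partial_k = 0$ (Lemma~\ref{lem:commdelksk}); these together yield $\cT_k^2 = 1$ at once. With (a) in hand I would upgrade Corollary~\ref{cor:recursion} to the full assertion $\cT_k(\csm(X(w)^\circ)) = \csm(X(ws_k)^\circ)$, valid for \emph{every} $w\in W$ and every~$k$: when $\ell(ws_k) > \ell(w)$ one applies Corollary~\ref{cor:recursion} with $ws_k$ in place of $w$; when $\ell(ws_k) < \ell(w)$ one applies $\cT_k$ to both sides of the recursion and cancels via $\cT_k^2 = 1$. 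Because the triangularity
\[
\csm(X(w)^\circ) = [X(w)] + \sum_{v < w} c(v;w)\,[X(v)]
\]
recalled in the introduction exhibits $\{\csm(X(w)^\circ)\}_{w \in W}$ as a $\Z$-basis of $H_*(G/B)$, the rule $\csm(X(w)^\circ) \mapsto \csm(X(ws_k)^\circ)$ completely determines the operator $\cT_k$.

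For the braid relations in (b), I would then simply iterate this description: for any $v \in W$,
\[
(\cT_i \cT_j)^{m_{i,j}}\,\csm(X(v)^\circ) = \csm\bigl(X(v\,(s_j s_i)^{m_{i,j}})^\circ\bigr) = \csm(X(v)^\circ)
\]
because $(s_j s_i)^{m_{i,j}} = 1$ in $W$. Hence $(\cT_i \cT_j)^{m_{i,j}} = 1$ on a basis, and therefore as an operator on $H_*(G/B)$. The remaining independence claim in (b) and part (c) then follow formally: the identities $\cT_k^2 = 1$ and $(\cT_i \cT_j)^{m_{i,j}} = 1$ are exactly the defining relations of the Coxeter presentation of $W$, so the assignment $s_k \mapsto \cT_k$ extends to a group homomorphism $W \to \mathrm{End}(H_*(G/B))$. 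The word-independence of $\cT_{i_1} \cdots \cT_{i_k}$ (for any word, reduced or not) and the multiplicativity $\cT_u \cT_v = \cT_{uv}$ are immediate consequences.

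The main obstacle is the braid relation: a direct attack starting from the commutation rules between the $\partial_k$ and $s_k$ would be combinatorially painful, and the plan above bypasses it by transferring the relation through the CSM basis onto the underlying Weyl group identity $(s_j s_i)^{m_{i,j}} = 1$. Consequently the crucial preparatory step is the length-unconditional form of Theorem~\ref{T:mainintro}, which is why (a) must be proved first.
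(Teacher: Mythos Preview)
Your proposal is correct and follows essentially the same route as the paper: part~(a) by direct algebra using $\partial_k^2=0$, $s_k^2=\id$, and Lemma~\ref{lem:commdelksk}; then the upgrade of Corollary~\ref{cor:recursion} to the unconditional identity $\cT_k(\csm(X(w)^\circ))=\csm(X(ws_k)^\circ)$, which is checked on the CSM basis to yield the braid relations and hence (b) and (c). The only cosmetic difference is that the paper reads off the word-independence of $\cT_w$ directly from this identity (since $\cT_{i_1}\cdots\cT_{i_k}$ sends $\csm(X(v)^\circ)$ to $\csm(X(v\,s_{i_k}\cdots s_{i_1})^\circ)$, which depends only on $w$), whereas you phrase it via the Coxeter presentation; these are equivalent.
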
 

\begin{proof} 
First we note that
\[ 
\cT_k^2=(\partial_k - s_k)^2 = s_k^2=1
\]
since $\partial_k^2=0$ and $\partial_k s_k+s_k\partial_k=0$
by Lemma~\ref{lem:commdelksk}. This proves part (a). 
To prove the first part of (b), it suffices to show that the relations
hold after applying the operators to the classes $\csm(X(w)^\circ)$, 
since these form a basis for $H_*(G/B)$. 
These relations follow then immediately from the fact that for all $w\in W$
and all simple reflections $s_k$,
\begin{equation}\label{E:cor3.4upgrade}
\cT_k(\csm(X(w)^\circ)) = \csm(X(ws_k)^\circ)
\end{equation}
as a consequence of Corollary~\ref{cor:recursion} and part (a).
The independence of $\cT_w$ on the specific word for $w$
is also an immediate consequence of~\eqref{E:cor3.4upgrade}.
Finally, (c) follows from the independence of $\cT_w$ on the word
representing~$w$.
\end{proof}

The proposition implies that the operators $\cT_w$ define 
a representation of the Weyl group $W$ on $H_*(G/B)$. 
We record an immediate consequence of the identity 
\eqref{E:cor3.4upgrade} from the proof of Proposition~\ref{prop:T}.

\begin{corol}\label{cor:CSMact} 
Let $u,w$ be two Weyl group elements. Then the identity 
\[ 
\cT_u(\csm(X(w)^\circ)) = \csm(X(w u^{-1})^\circ) 
\]
holds in $H_*(G/B)$. In particular, $\csm(X(w)^\circ)=\cT_{w^{-1}}([pt])$.
\end{corol}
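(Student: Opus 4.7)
The plan is to bootstrap from the single-reflection identity \eqref{E:cor3.4upgrade} that was already established inside the proof of Proposition~\ref{prop:T}, namely $\cT_k(\csm(X(w)^\circ)) = \csm(X(ws_k)^\circ)$ for every $w\in W$ and every simple reflection $s_k$, and then iterate. This reduces the corollary to an elementary induction on $\ell(u)$, the substantive content of which has already been extracted.

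Concretely, I would fix an arbitrary reduced decomposition $u = s_{i_1}\cdots s_{i_m}$. By Proposition~\ref{prop:T}(b) the operator $\cT_u$ does not depend on the choice of this word, and by part~(c) of that proposition we may write $\cT_u = \cT_{i_1}\cT_{i_2}\cdots \cT_{i_m}$. Applying this composition to $\csm(X(w)^\circ)$ from the innermost operator outward, \eqref{E:cor3.4upgrade} gives at the first step $\cT_{i_m}(\csm(X(w)^\circ)) = \csm(X(ws_{i_m})^\circ)$, at the second step $\cT_{i_{m-1}}(\csm(X(ws_{i_m})^\circ)) = \csm(X(ws_{i_m}s_{i_{m-1}})^\circ)$, and so on. After all $m$ applications we arrive at
\[
\cT_u(\csm(X(w)^\circ)) = \csm(X(w\, s_{i_m}s_{i_{m-1}}\cdots s_{i_1})^\circ)
= \csm(X(wu^{-1})^\circ),
\]
since $s_{i_m}\cdots s_{i_1} = (s_{i_1}\cdots s_{i_m})^{-1} = u^{-1}$. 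This establishes the main identity.

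For the ``in particular'' statement I would simply specialize. The identity Schubert cell $X(\id)^\circ$ is a point, so $\csm(X(\id)^\circ) = [pt]$. Applying the main identity with $w = \id$ and with $u$ replaced by $w^{-1}$ gives
\[
\cT_{w^{-1}}([pt]) = \cT_{w^{-1}}(\csm(X(\id)^\circ)) = \csm(X(\id\cdot (w^{-1})^{-1})^\circ) = \csm(X(w)^\circ),
\]
as claimed.

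There is no real obstacle here: all the nontrivial work sits in Theorem~\ref{thm:recursion}, Corollary~\ref{cor:recursion}, and Proposition~\ref{prop:T}, which together package the length-lowering recursion, the involutivity $\cT_k^2 = 1$, and the braid/homomorphism properties of $w\mapsto \cT_w$. The only thing to watch is the reversal of indices when composing the $\cT_{i_j}$'s, which is what converts $u$ into $u^{-1}$ in the answer; this is the mild bookkeeping step and is consistent with $\cT$ giving a right action in the same sense that multiplication by $s_k$ on Schubert cells is a right multiplication.
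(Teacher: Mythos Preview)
Your argument is correct and is exactly the approach the paper takes: it records the corollary as an immediate consequence of \eqref{E:cor3.4upgrade}, and your induction on a reduced word for $u$ together with the index reversal $s_{i_m}\cdots s_{i_1}=u^{-1}$ spells out precisely that immediate consequence.
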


Combining the actions of $\partial_k$ and $s_k$ on Schubert 
classes found in the identities \eqref{E:partial} and \eqref{E:raction} 
from~\S\ref{ss:twoops} we obtain the following explicit formula 
for $\cT_k$:

\begin{prop}\label{prop:Tk} 
\[ 
\cT_k ([X(w)])= 
\begin{cases} 
- [X(w)] & \textrm{if } \ell(ws_k) < \ell(w) \\  
\hphantom{-}[X(ws_k)] + [X(w)] + \sum \langle \alpha_k, 
\beta^\vee \rangle [X(ws_ks_\beta)] & \textrm{if } \ell(ws_k) > \ell(w)
\end{cases} 
\]
where the sum is over all positive roots $\beta\ne \alpha_k$ such that 
$\ell(w) = \ell(ws_k s_\beta)$.
\end{prop}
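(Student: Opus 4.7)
The plan is essentially to substitute the two known formulas, since by definition $\cT_k = \partial_k - s_k$ and the action of each summand on Schubert classes is already recorded in the preliminaries. There is no technical difficulty to overcome; the only task is to organize the two cases correctly.

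First I would split into the two cases according to whether $\ell(ws_k) < \ell(w)$ or $\ell(ws_k) > \ell(w)$, matching the case split that appears in both \eqref{E:partial} and \eqref{E:raction}. In the descending case $\ell(ws_k) < \ell(w)$, formula \eqref{E:partial} gives $\partial_k([X(w)]) = 0$, and formula \eqref{E:raction} gives $s_k[X(w)] = [X(w)]$, so
\[
\cT_k([X(w)]) = \partial_k([X(w)]) - s_k[X(w)] = 0 - [X(w)] = -[X(w)],
\]
which matches the first branch of the claimed formula.

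In the ascending case $\ell(ws_k) > \ell(w)$, formula \eqref{E:partial} gives $\partial_k([X(w)]) = [X(ws_k)]$, and formula \eqref{E:raction} gives
\[
s_k[X(w)] = -[X(w)] - \sum \langle \alpha_k, \beta^\vee\rangle [X(ws_ks_\beta)],
\]
the sum ranging over positive roots $\beta \ne \alpha_k$ with $\ell(w) = \ell(ws_ks_\beta)$. Subtracting,
\[
\cT_k([X(w)]) = [X(ws_k)] + [X(w)] + \sum \langle \alpha_k, \beta^\vee\rangle [X(ws_ks_\beta)],
\]
which matches the second branch.

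There is no real obstacle: the entire argument is a two-line verification once the identities \eqref{E:partial} and \eqref{E:raction} are in hand. The only thing worth being careful about is the sign conventions in~\eqref{E:raction} and the indexing of the sum (positive roots $\beta \ne \alpha_k$ with the stated length condition), which must be transcribed without change into the final expression.
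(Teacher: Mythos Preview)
Your argument is correct and is exactly the approach the paper takes: the proposition is stated as an immediate consequence of combining the identities~\eqref{E:partial} and~\eqref{E:raction}, with no further proof given. Your case split and sign bookkeeping are accurate.
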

\begin{example}\label{ex:Fl4}
Using Corollary~\ref{cor:CSMact} and Proposition~\ref{prop:Tk} it is 
straightforward to implement computations
of CSM classes of Schubert cells in symbolic manipulation packages
such as Maple. 
For instance, we obtain that the CSM class for the open cell
in the flag manifold $\Fl(4)$ (in type A) is:
{\small
\begin{multline*}
\csm(X(4321)^\circ) =
 [X(4321)]
+ [X(4312)]
+ [X(4231)]
+ [X(3421)]
+2[X(4213)]
+2[X(4132)] \\
+ [X(3412)]
+2[X(3241)]
+2[X(2431)]
+ [X(4123)]
+5[X(3214)]
+5[X(3142)] \\
+3[X(2413)]
+ [X(2341)]
+5[X(1432)]
+3[X(3124)]
+4[X(2314)]
+6[X(2143)] \\
+4[X(1423)]
+3[X(1342)]
+3[X(2134)]
+4[X(1324)]
+3[X(1243)]
+ [X(1234)]
\end{multline*}
}
\noindent where we use the standard identification of the elements of $W$
with permutations in indexing the $4!$ Schubert classes.

Note that the terms corresponding to the `Grassmannian permutations'
$(a_1 a_2 b_1 b_2)$ with $a_1<a_2$ and $b_1<b_2$ are
\[
[X(3412)]
+3[X(2413)]
+4[X(1423)]
+4[X(2314)]
+4[X(1324)]
+ [X(1234)]
\]
and push-forward as prescribed by identity~\eqref{E:pfcsm} 
in~\S\ref{ss:GmodP} to the CSM class for the open cell in
$G(2,4)$ (cf.~the row corresponding to 
\begin{tikzpicture}
\draw (0,0) --(0,-.5);
\draw (.25,0) --(.25,-.5);
\draw (.5,0) --(.5,-.5);
\draw (0,0) --(.5,0);
\draw (0,-.25) --(.5,-.25);
\draw (0,-.5) --(.5,-.5);
\end{tikzpicture}
in \cite[Example~1.2]{aluffi.mihalcea:csm}).
\qede\end{example}

\begin{remark}\label{rmk:pos} 
Even if $\ell(ws_k)>\ell(w)$, $\cT_k([X(w)])$ is in general 
not a positive combination of Schubert classes. For example, let
$G=\SL_4(\C)$, and let $w = w_0 s_3$, where $w_0$ is the longest
element in $W=S_4$, the symmetric group with $4$ letters. Using again
the standard identification of the elements of $W$ with permutations,
so that $w_0 = (4 3 2 1)$ and $s_3 = (1 2 4 3)$, then $w = (4 3 1 2)$
and 
\[ 
\cT_3([X(4312)]) = [X(4312)] + [X(4321)] - [X(4231)] \/. 
\]

Nevertheless, substantial evidence suggests that the classes 
$\cT_k(\gamma)$, and hence all classes $\cT_w(\gamma)$, {\em are\/}
positive linear combinations of Schubert classes if $\gamma$ is a positive
combination of CSM classes $\csm(X(u)^\circ)$; see~\S\ref{s:posCSM}.
\qede\end{remark}

%%%

\section{Positivity of CSM classes}\label{s:posCSM}

Fix $w \in W$ and consider the CSM class $\csm(X(w)^\circ)$. As we have
shown, if $s_{i_1}\cdots s_{i_k}$ is a reduced decomposition for $w$, then
\begin{equation}\label{E:recap1}
\csm(X(w)^\circ)=\theta_*(\csm(Z^\circ)) = \theta_* \left(
\prod_{j=1}^k (1+h_j)\cdot [Z]
\right)
\end{equation}
where $\theta: Z:=Z_{i_1,\dots,i_k} \to X(w)$ is the Bott-Samelson resolution
(Lemma~\ref{lem:CSMinBS}). We have also shown that
\[
\csm(X(w)^\circ)= \cT_{i_k}\cdots \cT_{i_1}([pt])
\]
(Corollary~\ref{cor:recursion}).
Since $\csm(X(w)^\circ) \in H_*(X(w))$, we have
\begin{equation}\label{E:exp} 
\csm(X(w)^\circ) = \sum_{u \le w} c(u;w) [X(u)] 
\end{equation} 
where $c(u;w)$ are well-defined integers.
In fact, $c(w;w) = 1$ since the map $\theta$ is birational, and $c(\id;w) = 1$ 
since $X(w)^\circ\cong \Abb^{\ell(w)}$ and $\chi(\Abb^{\ell(w)})=1$.

The operator $\cT_k$ does not preserve positivity: 
$\cT_k([X(s_k)^\circ])=-[X(s_k)^\circ]$ by Proposition~\ref{prop:Tk},
and in fact $\cT_k([X(w)^\circ])$ may have negative contributions
from Schubert classes even if $\ell(ws_k)>\ell(w)$ (Remark~\ref{rmk:pos}).
Examples also show that $\csm(Z^\circ)$ is not necessarily a positive 
combination of strata of the normal crossing divisor 
$D_Z:=Z\smallsetminus Z^\circ$ ($Z_{12321}^\circ$ is the smallest such 
example). So one should not expect any positivity properties of the CSM 
class {\it a priori}. Nevertheless, we conjecture that these classes {\em are\/} 
positive:

\begin{conj}\label{conj:pos} 
For all $u \le w$, the coefficient $c(u;w)$ from the expansion \eqref{E:exp} 
is strictly positive. 
\end{conj}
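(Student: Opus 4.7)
The plan is to proceed by induction on $\ell(w)$ using the recursion
$\csm(X(w)^\circ) = \cT_k(\csm(X(ws_k)^\circ))$ from Corollary~\ref{cor:recursion},
with base case $\csm([pt]) = [pt]$. The inductive step amounts to showing that
$\cT_k$ preserves the cone of Schubert-positive classes of the form
$\csm(X(u)^\circ)$. However, Proposition~\ref{prop:Tk} shows that $\cT_k([X(u)])$
is not itself Schubert-positive: it equals $-[X(u)]$ when $\ell(us_k) < \ell(u)$,
and even when $\ell(us_k)>\ell(u)$ the correction terms
$\langle \alpha_k, \beta^\vee \rangle [X(us_k s_\beta)]$ can be negative, as
illustrated in Remark~\ref{rmk:pos}. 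So the positivity of
$\cT_k(\csm(X(ws_k)^\circ))$ must come from substantial cancellation among the
contributions of individual Schubert classes in the inductive hypothesis, and it
is this cancellation that any proof must identify.

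The main approach I would pursue is to exhibit a geometric, effective representation
of $\csm(X(w)^\circ)$, in the spirit of Huh's proof for Grassmannians. Concretely,
I would look for a resolution $\pi : \widetilde{X} \to X(w)$ with SNC boundary
$D = \widetilde{X} \smallsetminus \pi^{-1}(X(w)^\circ)$, together with sections of
$T_{\widetilde{X}}(-\log D) \otimes L_i$ for suitable globally generated line bundles
$L_i$, whose degeneracy loci are $B$-invariant effective cycles. The pushforward of
a $B$-invariant effective cycle from a $B$-equivariant resolution of $X(w)$ to $G/B$
is automatically a non-negative combination of Schubert classes, and the Chern class
formula~\eqref{E:csmdefSNC} would then realize $\csm(X(w)^\circ)$ as a sum of such
pushforwards. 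The Bott-Samelson resolution of Section~\ref{ss:BS} is the natural
candidate, with the logarithmic tangent bundle explicitly controlled by
Proposition~\ref{prop:divclass}(c). As the authors note, however, Huh's technical
hypotheses appear to fail for Bott-Samelson varieties in general, so one may need
to replace them with small or otherwise non-standard resolutions tailored to $w$.

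Two parallel strategies would support this main plan. First, I would exploit the
connection with the Maulik-Okounkov stable envelopes via the results of
Rim\'anyi-Varchenko and Su cited in the introduction: the well-developed positivity
theory for stable envelopes in the equivariant cohomology of the cotangent bundle,
transported through localization, may yield a conceptual proof without requiring an
explicit effective cycle. Second, for special families of $w$ (cominuscule,
covexillary, or dominant permutations in type~A) I would attempt a direct
combinatorial argument by iterating the formula in Proposition~\ref{prop:Tk} starting
from $[pt]$ and identifying $c(u;w)$ with the count of an appropriate combinatorial
structure, such as weighted Bruhat chains or decorated reduced-word arrays. The hard
part will be the general case: producing a sufficiently well-behaved resolution for
arbitrary $G/B$ and arbitrary $w$, or finding a uniform combinatorial model,
appears to require a new idea beyond what is available from the Grassmannian case
or from direct manipulation of the operators $\cT_k$.
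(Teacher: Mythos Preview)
This statement is a \emph{conjecture} in the paper, not a theorem: the paper does not contain a proof, and at the time of writing the statement was open. What the paper does provide is partial evidence---positivity of the codimension-one coefficients (Corollary~\ref{cor:posdiv}) and positivity for $w$ admitting a decomposition into distinct simple reflections (Corollary~\ref{cor:posdist})---together with the remark that Huh's argument for Grassmannians does not extend directly because its technical hypotheses fail for $G/B$.

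Your submission is therefore not a proof but a research plan, and you seem aware of this: you correctly identify that the naive induction via $\cT_k$ fails because $\cT_k$ does not preserve the Schubert-positive cone (exactly the obstruction the paper highlights in Remark~\ref{rmk:pos}), and you outline several possible attacks. These are reasonable directions---the geometric effectivity approach in the spirit of Huh, and the stable-envelope connection flagged in the paper's introduction, are both natural---but none of them is carried out, and each of the ``hard parts'' you flag is genuinely hard. In particular, producing $B$-invariant effective degeneracy loci on a Bott--Samelson resolution is precisely the step that the paper notes does not go through as in the Grassmannian case, so your main plan restates the known obstruction rather than resolving it.

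In short: there is no proof in the paper to compare against, and your proposal does not supply one either. If your intent was to sketch a strategy, it is a sensible survey of avenues, but it should not be labeled a proof.
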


Note that with notation as above, the class of the Schubert {\em variety\/}
$X(w)$ is given by
\begin{equation}\label{E:Svariety}
\csm(X(w))=\sum_{u\le w} \left(\sum_{u\le v\le w} c(u;v)\right) [X(u)]\/;
\end{equation}
indeed, $\one_{X(w)} = \sum_{v\le w} \one_{X(v)^\circ}$.
So Conjecture~\ref{conj:pos} would imply that these classes are also
necessarily effective.

A positivity result analogous to Conjecture~\ref{conj:pos} was conjectured
by the authors in \cite{aluffi.mihalcea:csm} for Schubert cells in the Grassmannian 
$\Gr(p,n)$ of subspaces of dimension $p$ in $\C^n$. This conjecture was proved 
in \cite{aluffi.mihalcea:csm} in the case $p=2$, in \cite{mihalcea:binomial} 
for $p=3$, and several classes of coefficients were proved to be positive 
by B.~Jones \cite{MR2628830} and J.~Stryker \cite{stryker}. 
The full conjecture  has recently been proven by June Huh \cite{huh:csm}. 
By Proposition~\ref{prop:pfcsm}, the CSM classes of Schubert cells in any 
homogeneous space $G/P$ are in fact push-forwards of CSM classes of 
Schubert cells in $G/B$; therefore Conjecture~\ref{conj:pos} would 
simultaneously imply the positivity of all CSM classes of Schubert cells in 
all $G/P$, and in particular it would yield an alternative proof of Huh's 
theorem.

By the same token, Huh's theorem provides some evidence for 
Conjecture~\ref{conj:pos}, since it implies that $c(u;w)>0$ in type A when 
$u$ is a Grassmannian permutation (cf.~Example~\ref{ex:Fl4}).
In fact, Conjecture~\ref{conj:pos} in type A is also supported by explicit 
computations of several thousand cases. At the time of this writing, we 
have verified that the CSM classes of all Schubert cells in $\Fl(n)$ are positive 
for $n\le 7$ and for all words of length $\le 15$ in $\Fl(8)$. 

In the rest of this section we discuss more evidence for 
Conjecture~\ref{conj:pos} in all types. We prove positivity in the following cases:
\begin{itemize}
\item $c(u;w)>0$ if $u<w$ and $\ell(w) - \ell(u) = 1$ (Corollary~\ref{cor:posdiv});
\item $c(u;w)>0$ for all $u\le w$ if $w$ admits a decomposition into distinct 
simple reflections (Corollary~\ref{cor:posdist}).
\end{itemize}
These two results will follow from more general considerations which
seem independently interesting: the first one is an explicit computation
of the codimension~$1$ term in the CSM class of a Schubert cell
(Proposition~\ref{prop:codone}), and the second one highlights one
case in which the operator $\cT_k$ {\em does\/} preserve positivity
(Proposition~\ref{prop:Tkpos}).

\begin{prop}\label{prop:codone} 
Let $\rho=\omega_1 + \cdots + \omega_r$ be the sum of the fundamental
weights, and let $w\in W$. Then 
\[
\csm(X(w)^\circ) = [X(w)]+ c_1(\caL_\rho)\cdot [X(w)]+ \text{lower dimensional terms.}
\]
\end{prop}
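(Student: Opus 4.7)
The plan is to proceed by induction on $\ell(w)$. The base case $w=\id$ is immediate since $\csm(X(\id)^\circ)=[pt]=[X(\id)]$ and any codimension-$1$ correction lives in negative degree on a point. For the inductive step, I choose a simple reflection $s_k$ with $\ell(ws_k)<\ell(w)$ and set $w':=ws_k$; by Corollary~\ref{cor:recursion},
$$\csm(X(w)^\circ) \;=\; \cT_k(\csm(X(w')^\circ)) \;=\; \cT_k\bigl([X(w')]+c_1(\caL_\rho)\cdot[X(w')]+\cdots\bigr),$$
with the ``$\cdots$'' denoting terms in homological degree $<2\ell(w)-4$.

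I would then sort the result by homological degree. Since $\partial_k$ raises degree by~$2$ and $s_k$ preserves it, the top piece $[X(w)]$ arises solely from $\partial_k([X(w')])$ (using Proposition~\ref{prop:BGGop}(a) with $\ell(w's_k)>\ell(w')$). The codimension-$1$ piece in $H_{2\ell(w)-2}(G/B)$ collects exactly two contributions: $-s_k([X(w')])$ and $\partial_k(c_1(\caL_\rho)\cdot[X(w')])$; the remaining terms $-s_k(c_1(\caL_\rho)\cdot[X(w')])$ and $\cT_k(\text{lower})$ land below this degree.

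The first contribution is read off from~\eqref{E:raction}, giving
$$-s_k([X(w')]) \;=\; [X(w')]+\sum_{\beta\ne\alpha_k} \langle\alpha_k,\beta^\vee\rangle\,[X(ws_\beta)]$$
over positive roots $\beta$ with $\ell(ws_\beta)=\ell(w)-1$. For the second contribution I would apply the Leibniz rule in Proposition~\ref{prop:BGGop}(b), together with $\partial_k(c_1(\caL_\rho))=\langle\rho,\alpha_k^\vee\rangle=1$ and $\partial_k([X(w')])=[X(w)]$, to obtain
$$\partial_k(c_1(\caL_\rho)\cdot[X(w')]) \;=\; [X(w')]+c_1(\caL_{\rho-\alpha_k})\cdot[X(w)].$$
Summing the two expressions and expanding $c_1(\caL_{\rho-\alpha_k})\cdot[X(w)]$ by the Chevalley formula~\eqref{E:Chevalley}, the $\langle\alpha_k,\beta^\vee\rangle$ terms cancel against the $-\langle\alpha_k,\beta^\vee\rangle$ hidden inside $\langle\rho-\alpha_k,\beta^\vee\rangle$, while the $\beta=\alpha_k$ summand contributes $-[X(w')]$, cancelling one of the two loose copies of $[X(w')]$. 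The remaining copy is absorbed using $\langle\rho,\alpha_k^\vee\rangle=1$ and $ws_k=w'$ as the $\beta=\alpha_k$ term of the full Chevalley sum. What is left is exactly $\sum_\beta\langle\rho,\beta^\vee\rangle[X(ws_\beta)]=c_1(\caL_\rho)\cdot[X(w)]$.

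The main obstacle is the careful bookkeeping of whether $\beta=\alpha_k$ is included or excluded in each of the two partial sums (the Chevalley sum ranges over all positive $\beta$, while the $s_k$-action sum excludes $\alpha_k$), and of the two stray copies of $[X(w')]$ coming from $-s_k$ and from $\partial_k(c_1(\caL_\rho))$. The single arithmetic identity $\langle\rho,\alpha_k^\vee\rangle=1$, applied twice, is precisely what makes these discrepancies reassemble into the Chevalley expansion of $c_1(\caL_\rho)\cdot[X(w)]$.
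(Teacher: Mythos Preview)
Your inductive argument is correct; the degree bookkeeping and the cancellation of the $\langle\alpha_k,\beta^\vee\rangle$ terms work exactly as you describe, and the two loose copies of $[X(w')]$ together with the $\beta=\alpha_k$ Chevalley term do reassemble into the $\beta=\alpha_k$ summand of $c_1(\caL_\rho)\cdot[X(w)]$.

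However, your route is quite different from the paper's. The paper does not induct on $\ell(w)$ at all: it works directly on the Bott-Samelson resolution $\theta:Z\to X(w)$ and invokes the canonical bundle formula $K_Z=\cO_Z(-D_Z)\otimes\theta^*(\caL_{-\rho})$ (from \cite[Prop.~2.2.2]{brion.kumar:frobenius}). Comparing this with the expression $c_1(T_Z)=[D_Z]+\sum_j h_j$ from Proposition~\ref{prop:divclass}(c) yields the global identity $h_1+\cdots+h_k=\theta^*(c_1(\caL_\rho))$, after which the statement drops out of~\eqref{E:recap1} and the projection formula in one line. The paper's approach is shorter and identifies the codimension-$1$ term at once, but it imports a nontrivial geometric input (the canonical bundle of a Bott-Samelson variety). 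Your approach stays entirely within the algebra of the operators $\partial_k$, $s_k$, $\cT_k$ and the Chevalley formula already developed in the paper, so it is more self-contained, at the cost of the careful term-by-term accounting you flag at the end.
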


\begin{proof} 
Let $w = s_{i_1} \cdots s_{i_k}$ be a reduced decomposition, 
and let $Z:= Z_{i_1, \dots, i_k}$ with SNC divisor $D_Z$, 
as in~\S\ref{ss:BS}. By \cite[Prop. 2.2.2]{brion.kumar:frobenius},
\[ 
K_Z =\cO_Z(- D_Z) \otimes \theta^*(c_1(\caL_{-\rho}))
\] 
and hence $c_1(T_Z)=[D_Z]+\theta^*(c_1(\caL_{\rho}))$.
On the other hand, $c_1(T_Z)=[D_Z]+\sum_{i=1}^k h_i$ by 
Proposition~\ref{prop:divclass}~(c). Therefore
$h_1+\cdots+h_k = \theta^*(c_1(\caL_\rho))$,
and the stated identity follows from~\eqref{E:recap1} and the
projection formula.
\end{proof}

\begin{corol}\label{cor:posdiv}
The coefficient $c(u;w)>0$ if $u<w$ with $\ell(u)=\ell(w)-1$.
\end{corol}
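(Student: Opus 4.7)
The plan is to extract $c(u;w)$ directly from the codimension-one term of $\csm(X(w)^\circ)$ supplied by Proposition~\ref{prop:codone}. Since $u<w$ with $\ell(u)=\ell(w)-1$, the class $[X(u)]$ lies in $H_{2\ell(w)-2}(G/B)$, so $c(u;w)$ coincides with the coefficient of $[X(u)]$ in the dimension-$(\ell(w)-1)$ part of $\csm(X(w)^\circ)$. By Proposition~\ref{prop:codone}, that part equals $c_1(\caL_\rho)\cdot [X(w)]$.

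Next, I would apply the Chevalley formula~\eqref{E:Chevalley} to this product, obtaining
\[
c_1(\caL_\rho)\cdot [X(w)] \;=\; \sum_{\beta} \langle \rho,\beta^\vee\rangle\, [X(ws_\beta)],
\]
where $\beta$ ranges over positive roots with $\ell(ws_\beta)=\ell(w)-1$. The classical characterization of Bruhat covers guarantees that any $u\lessdot w$ has the form $u=ws_\beta$ for some such positive root $\beta$, so $[X(u)]$ does appear in this sum and $c(u;w)=\langle \rho,\beta^\vee\rangle$ for the corresponding~$\beta$.

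Finally, I would verify strict positivity of $\langle \rho,\beta^\vee\rangle$. For any positive root $\beta$, the coroot $\beta^\vee$ is a non-negative (and not entirely zero) integral combination of the simple coroots $\alpha_i^\vee$, a standard fact about crystallographic root systems. Since $\rho=\omega_1+\cdots+\omega_r$ and the fundamental weights are defined by $\langle \omega_i,\alpha_j^\vee\rangle=\delta_{ij}$, we have $\langle \rho,\alpha_j^\vee\rangle=1$ for every simple coroot, and hence $\langle \rho,\beta^\vee\rangle$ is a strictly positive integer. Therefore $c(u;w)>0$, as required.

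Honestly, this corollary is essentially a direct reading of Proposition~\ref{prop:codone}, so there is no genuine technical obstacle; the only items invoked beyond that proposition are two textbook facts (the subword/reflection characterization of Bruhat covers and the positivity of $\rho$ on positive coroots). If anything, the subtle work was already absorbed into the proof of Proposition~\ref{prop:codone}, where the canonical class identity $K_Z=\cO_Z(-D_Z)\otimes\theta^*\caL_{-\rho}$ for Bott–Samelson varieties was used to identify $h_1+\cdots+h_k$ with $\theta^*c_1(\caL_\rho)$.
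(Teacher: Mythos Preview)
Your proof is correct and follows essentially the same route as the paper: both read off the codimension-one term from Proposition~\ref{prop:codone}, apply the Chevalley formula, write $u=ws_\beta$ for a positive root $\beta$, and conclude by observing $\langle\rho,\beta^\vee\rangle>0$. You are slightly more explicit than the paper in justifying this last inequality (via the expansion of $\beta^\vee$ in simple coroots), but the argument is the same.
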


\begin{proof}
Recall that $\langle \omega_i , \alpha_j^\vee\rangle = \delta_{ij}$ 
(the Kronecker symbol) and in particular $\langle \rho,\alpha_j^\vee\rangle>0$ 
for all simple roots $\alpha_j$. Consider $u < w$ such that $\ell(u) = \ell(w) - 1$. 
Then $u = w s_\beta$ for some positive root $\beta \in R^+$ (see e.g. \cite[\S 5.11]{humphreys:reflection}). By the 
Chevalley formula \eqref{E:Chevalley}, the coefficient of $[X(ws_\beta)]$ in 
$c_1(\caL_\rho) \cap [X(w)]$ equals $\langle\rho, \beta ^\vee \rangle > 0$, 
concluding the proof.
\end{proof}

\begin{prop}\label{prop:Tkpos} 
Let $w \in W$ be a Weyl group element, and assume $w$ admits a
decomposition into simple reflections other than $s_k$.
\begin{itemize}
\item[(a)] The homology class $T_k([X(v)])$ is a non-negative linear 
combination of Schubert classes $[X(u)]$ with $u\le vs_k$. In fact,
\[
\cT_k([X(v)]) = [X(vs_k)]+[X(v)]+\sum_{u<vs_k, u\ne v} d_k(u;v)[X(u)]
\]
with $d_k(u;v)\ge 0$ for all $u<vs_k$.
\item[(b)] Assume in addition that $s_k$ commutes with all simple 
reflections in a decomposition of $v$. 
Then $d_k(u;v)=0$ for $u<vs_k$, $u\ne v$, that is:
\[
\cT_{k}([X(v)]) = [X(vs_{k})] + [X(v)] \/. 
\]
\end{itemize}
\end{prop}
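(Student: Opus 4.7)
The plan is to use the explicit formula for $\cT_k$ acting on Schubert classes given by Proposition~\ref{prop:Tk}. The hypothesis that $v$ has a decomposition avoiding $s_k$ is equivalent to $v$ lying in the parabolic subgroup $W_J$ generated by the simple reflections $s_j$ with $j\neq k$; in particular $\ell(vs_k) = \ell(v)+1$, and Proposition~\ref{prop:Tk} yields
\[
\cT_k([X(v)]) = [X(vs_k)] + [X(v)] + \sum_{\beta} \langle \alpha_k, \beta^\vee\rangle\,[X(vs_k s_\beta)],
\]
the sum being over positive roots $\beta\neq\alpha_k$ with $\ell(vs_ks_\beta) = \ell(v)$, or equivalently with $(vs_k)\beta<0$. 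Distinct $\beta$'s in this sum produce distinct Schubert classes, none of which equals $[X(v)]$ (since $vs_ks_\beta = v$ would force $s_\beta = s_k$). Thus both parts reduce to showing that the coefficient $\langle\alpha_k,\beta^\vee\rangle$ is non-negative (for~(a)) and zero (for~(b)) whenever $\beta$ indexes the sum.

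For part~(a), I would first observe that for any positive root $\beta\neq\alpha_k$, the root $s_k\beta$ is still positive: writing $\beta = c_k\alpha_k + \sum_{j\neq k}c_j\alpha_j$ with $c_i\geq 0$, the reflection $s_k$ fixes the coefficients $c_j$ for $j\neq k$, and at least one of these is strictly positive because $\beta\neq\alpha_k$. Hence $v(s_k\beta)<0$ says $s_k\beta$ is an inversion of $v\in W_J$. A standard property of parabolic subgroups---provable by induction on $\ell(v)$ using that every simple reflection $s_j\in W_J$ preserves the $\alpha_k$-coefficient of any root---implies that every inversion of $v\in W_J$ lies in the positive roots of the parabolic root subsystem $R_J$, i.e.\ has vanishing $\alpha_k$-coefficient. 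Expanding $s_k\beta = \beta - \langle\beta,\alpha_k^\vee\rangle\alpha_k$ converts this vanishing into the identity $c_k = \sum_{j\neq k}c_j|a_{jk}|$, where $a_{jk}:=\langle\alpha_j,\alpha_k^\vee\rangle\leq 0$. Substituting back into the expansion of $(\alpha_k,\beta)$ gives $(\alpha_k,\beta) = \frac{1}{2}(\alpha_k,\alpha_k)\sum_{j\neq k}c_j|a_{jk}| \geq 0$, hence $\langle\alpha_k,\beta^\vee\rangle\geq 0$, as needed.

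For part~(b), the stronger hypothesis that $s_k$ commutes with every simple reflection appearing in a decomposition of $v$ forces the corresponding simple roots to be orthogonal to $\alpha_k$, so the inversion set of $v$ is contained in $\alpha_k^\perp$. Applying this to $s_k\beta$ and using $(s_k\beta,\alpha_k) = -(\beta,\alpha_k)$ yields $(\beta,\alpha_k)=0$, whence $\langle\alpha_k,\beta^\vee\rangle = 0$. Every term in the sum therefore vanishes, leaving $\cT_k([X(v)]) = [X(vs_k)]+[X(v)]$.

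I expect the main obstacle to be the middle step---turning the parabolic inversion condition into the numerical inequality $(\alpha_k,\beta)\geq 0$---which hinges on the exact algebraic identity $c_k = \sum_{j\neq k}c_j|a_{jk}|$ forced by the vanishing of the $\alpha_k$-coefficient of $s_k\beta$, and on keeping straight the distinction between $\beta^\vee$ and the simple coroots. The remaining steps are organizational, built around Proposition~\ref{prop:Tk} and the standard correspondence between Bruhat covers and root inversions.
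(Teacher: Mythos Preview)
Your argument is correct. Both you and the paper reduce to Proposition~\ref{prop:Tk} and then to controlling the sign of $\langle\alpha_k,\beta^\vee\rangle$ for each $\beta$ indexing the sum, but the way you establish this sign differs from the paper in a way worth noting.

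For part~(a), the paper splits into the cases $v(\beta)>0$ and $v(\beta)<0$ and, in the latter, reads off the sign of $\langle\beta,\alpha_k^\vee\rangle$ from the $\alpha_k$-coefficient of the negative root $vs_k(\beta)$. Your argument is more uniform: you observe once and for all that $s_k\beta$ is positive (since $\beta\neq\alpha_k$) and is an inversion of $v\in W_J$, hence lies in $R_J^+$, so its $\alpha_k$-coefficient vanishes. This yields the exact identity $\langle\beta,\alpha_k^\vee\rangle=c_k\ge 0$, which is sharper than the paper's inequality and avoids the case analysis entirely. For part~(b), the paper argues that the root subsystem attached to $S_v\cup\{s_k\}$ splits as $R_{S_v}\sqcup\{\pm\alpha_k\}$ and hence $\beta\in R_{S_v}$; you instead reuse the conclusion $s_k\beta\in R_{S_v}\subset\alpha_k^\perp$ from~(a) together with the reflection identity $(s_k\beta,\alpha_k)=-(\beta,\alpha_k)$. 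Both routes are short; yours has the advantage of recycling the key observation from~(a) rather than introducing a new structural statement.
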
 

\begin{proof} 
Let $S_v := \{ s_{i_1}, \ldots , s_{i_t} \}$ be the set of reflections 
appearing in a reduced decomposition of $v$; this set is 
independent of the choice of reduced decomposition, since
it is preserved by the braid relations in $W$ (see e.g., 
\cite[\S 5.1]{humphreys:reflection}). Since every decomposition of
$v$ into simple reflections can be reduced to a reduced decomposition,
the hypothesis of the proposition implies that $s_k\not\in S_v$ in 
part~(a), and that further $s_k$ commutes with all $s_{i_j}\in S$ 
in part~(b).

Since $vs_k > v$, by Proposition~\ref{prop:Tk} we have
\begin{equation}\label{E:beta}
\cT_{k}([X(v)]) = [X(vs_{k})] + [X(v)] + \sum \langle \alpha_{k}, 
\beta^\vee \rangle [X(vs_{k}s_\beta)]
\end{equation}
where the sum is over all positive roots $\beta\ne \alpha_{k}$ such 
that $\ell(v) = \ell(vs_{k} s_\beta)$. We have to prove that, under the 
hypothesis of the proposition, 
$\langle \alpha_{k}, \beta^\vee \rangle\ge 0$ for all $\beta$ in the 
range of summation. In fact, all the $\beta$ in this range satisfy 
$vs_k s_\beta< vs_k$, and we will verify that $\langle \alpha_{k}, 
\beta^\vee \rangle\ge 0$ for all such reflections $\beta$.
By \cite[\S 5.7]{humphreys:reflection} the condition $vs_k > v$ 
implies that $v(\alpha_k) >0$, and $vs_{k} s_\beta < v s_k$ 
implies that $v s_k (\beta) < 0$, i.e., 
\begin{equation}\label{E:ineq} 
v ( \beta - \langle \beta, \alpha_k^\vee \rangle \alpha_k) 
= v(\beta) -  \langle \beta, \alpha_k^\vee \rangle v(\alpha_k) <0 \/. 
\end{equation} 
If $v(\beta) >0$, then we are done, because $v(\alpha_k) >0$. 
So we assume $v(\beta) <0$, which is equivalent to $vs_\beta < v$,
and it follows that $vs_\beta$ admits a reduced expression only using 
reflections in $S_v$. We deduce that $s_k$ does not appear in a 
reduced expression for $s_\beta$, and hence that the simple root 
$\alpha_k$ does not appear in the support of the positive root $\beta$. 
Since $s_{\alpha_k}\not\in S_v$, it follows that $\alpha_k$ does not 
appear in the support of $v(\beta)$; and $\alpha_k$ appears with
coefficient $+1$ in $v(\alpha_k)$. Then \eqref{E:ineq} forces 
$\langle \beta, \alpha_k^\vee \rangle \ge 0$ as claimed. 
This proves part (a). 

To prove part (b) we use a similar argument. By \eqref{E:beta},
it suffices to show that $\langle \alpha_k, \beta^\vee \rangle = 0$ 
for all reflections $s_\beta$ such that $v s_{k} s_\beta < vs_k $.
This relation implies that $s_\beta$ has a reduced decomposition 
containing only simple reflections in an expression for $vs_k$. 
But $\beta \neq \alpha_{k}$, and no simple reflections $s_{\alpha_j}$ 
with $\alpha_j$ adjacent to $\alpha_{k}$ in the Dynkin diagram 
for $G$ can appear in the decomposition of $s_\beta$: otherwise such 
reflections would appear in $S_v$, contradicting the commutativity 
hypothesis. This implies that the support of $\beta$ does not contain 
any simple root adjacent to $\alpha_k$, thus  
$\langle \alpha_k, \beta^\vee \rangle = 0$, 
concluding the proof. 
\end{proof} 

\begin{corol}\label{cor:posdist}
Let $w \in W$ be a Weyl group element, and assume $w$ admits a
decomposition into simple reflections other than $s_k$.
If $c(v;w) >0$ for all $v \le w$, then $c(u;w s_k) >0$ for all $u \le ws_k$. 

In particular, if $w\in W$ admits a decomposition into distinct simple
reflections, then $c(u;w)>0$ for all $u\le w$.
\end{corol}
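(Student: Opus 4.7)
My plan is to combine Corollary~\ref{cor:CSMact}, which gives
\[
\csm(X(ws_k)^\circ) = \cT_k(\csm(X(w)^\circ)) = \sum_{v\le w} c(v;w)\,\cT_k([X(v)])\/,
\]
with the non-negativity statement of Proposition~\ref{prop:Tkpos}(a), applied to each summand. The first thing I would verify is that the hypothesis of that proposition holds for every $v\le w$ appearing in the sum: the hypothesis on $w$ says $w$ admits a reduced expression avoiding $s_k$, and because the set of simple reflections occurring in any reduced expression for $v\le w$ is contained in the corresponding set for $w$, each such $v$ inherits a decomposition avoiding $s_k$. Proposition~\ref{prop:Tkpos}(a) then writes each $\cT_k([X(v)])$ as a non-negative combination of Schubert classes in which $[X(v)]$ and $[X(vs_k)]$ both occur with coefficient exactly $1$. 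Combined with $c(v;w)>0$, this already yields $c(u;ws_k)\ge 0$ for every $u\le ws_k$ with no further work.

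The heart of the argument will be upgrading non-negativity to strict positivity. I would exploit the subword characterization of Bruhat order applied to the reduced expression for $ws_k$ obtained by appending $s_k$ to a reduced expression for $w$; this expression is reduced because $s_k$ lies outside the support of $w$ and hence $ws_k>w$. The dichotomy I would extract is: for $u\le ws_k$, either $u\le w$, or $u=u's_k$ for some $u'\le w$ with $\ell(u)=\ell(u')+1$. In the first case the $v=u$ summand contributes $c(u;w)\cdot 1>0$ to the coefficient of $[X(u)]$; in the second case $\ell(u's_k)>\ell(u')$ puts $v=u'$ in the regime where $[X(u's_k)]=[X(u)]$ appears with coefficient $+1$ in $\cT_k([X(u')])$, so the $v=u'$ summand contributes $c(u';w)\cdot 1>0$. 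All other contributions being non-negative, $c(u;ws_k)>0$ follows.

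The second assertion follows by induction on the length of a given decomposition of $w$ into pairwise distinct simple reflections. The base case $w=\id$ is immediate from $\csm(X(\id)^\circ)=[X(\id)]$. For the inductive step, write $w=w' s_i$ where $w'$ is the product of the remaining distinct simple reflections (all different from $s_i$); by induction $c(v;w')>0$ for all $v\le w'$, and applying the first part of the corollary with $w'$ in place of $w$ and $k=i$ gives $c(u;w)>0$ for all $u\le w$.

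The main obstacle I anticipate is the clean execution of the dichotomy in the second paragraph: translating ``$u\le ws_k$ but $u\not\le w$'' into ``$u=u's_k$ with $u'\le w$ and $u'<u$'' via the subword characterization, and then verifying that in that case the coefficient of $[X(u)]$ in $\cT_k([X(u')])$ is $+1$ rather than $-1$. Both points are consequences of the standard lifting property of Bruhat order, but they need to be articulated carefully so that the contribution isolated from the $v=u'$ term really is the positive $+1$ branch of Proposition~\ref{prop:Tkpos}(a).
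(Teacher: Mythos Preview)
Your proposal is correct and follows essentially the same route as the paper: apply $\cT_k$ to the CSM expansion of $X(w)^\circ$, use the fact that $s_k$ is absent from the support of every $v\le w$ so Proposition~\ref{prop:Tkpos}(a) applies termwise, and then invoke the subword dichotomy (either $u\le w$ or $u=u's_k$ with $u'\le w$) to extract a strictly positive contribution. The induction for the second statement is also identical to the paper's.
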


\begin{proof} 
The second statement follows from the first by an immediate induction.
To prove the first statement, note that if $s_k$ does not appear in a
decomposition for $w$, then it does not appear in a reduced
decomposition for $w$, and hence it does not appear in a decomposition
for $v$. Thus the hypothesis of Proposition~\ref{prop:Tkpos} applies
to all $v\le w$.  By Corollary~\ref{cor:recursion} we have
\begin{align*}
\csm(X(ws_k)^\circ) &= \cT_k(\csm(w)^\circ) 
= \cT_k\sum_{v\le w} c(v;w)[X(v)] \\
&= \sum_{v\le w} c(v;w) \left(
[X(vs_k)]+[X(v)]+\sum_{u'<vs_k, u'\ne v} d_k(u';v)[X(u')]
\right)
\end{align*}
with $c(v;w)>0$ by hypothesis and $d_k(u';v)\ge 0$ 
by~Proposition~\ref{prop:Tkpos}.
The statement is immediate from this expression, since $u\le ws_k$
implies that $u$ has a reduced expression which is a subexpression 
of one for $w s_k$, thus either $u=v \le w$ or $u= v s_k$ with $v \le w$. 
\end{proof}

A particular case of Corollary~\ref{cor:posdist} is particularly vivid: 
if $w=s_{i_1}\cdots s_{i_k}$ is a reduced decomposition and the simple 
reflections $s_{i_1}, \dots, s_{i_k}$ {\em commute\/} with one another, 
then an induction argument based on Proposition~\ref{prop:Tkpos}(b) 
implies that
\begin{equation}\label{E:comm}
\csm(X(w)^\circ) = \sum_{u\le w} [X(u)]\/.
\end{equation}

Notice that if $w$ satisfies this condition, then so does every $v$ preceding
it in the Bruhat order. Then \eqref{E:Svariety} and~\eqref{E:comm} 
give the CSM class of the Schubert {\em variety}
\[
\csm(X(w)) =\sum_{u\le w} 2^{\ell(w)-\ell(u)} [X(u)]\/,
\]
for every $w\in W$ decomposing into commuting simple reflections.

%%%

\section{Equivariant Chern-Schwartz-MacPherson classes of Schubert cells}\label{s:equiv}
In this section we extend our calculation of CSM classes to the $T$-equivariant 
situation. We will show that the same difference $\partial_k - s_k$ defines an operator 
$\cT^T_k$ on equivariant homology $H^T_*(G/B)$, sending an (equivariant) CSM class
$\csm^T(X(w)^\circ)$ to the class $\csm^T(X(ws_k)^\circ)$. The proof of 
Lemma~\ref{lem:commdelksk} extends to the equivariant setting, and shows that 
$(\partial_k - s_k)^2 = \id$. In particular, the operators $\cT_k^T$ give a representation 
of $W$ on equivariant homology. The proof that $\cT_k^T$ acts as expected on CSM 
classes is essentially identical to the proof in the non-equivariant case; 
we only need to verify that no `equivariant corrections' are introduced
in the recursion formula from Corollary~\ref{cor:recursion}.

\subsection{Equivariant CSM classes} 
Recall that $T \subset B$ is the maximal torus in the Borel subgroup
$B$. If $X$ is a variety with a $T$-action,
then the equivariant
cohomology $H^*_T(X)$ is the ordinary cohomology of the Borel mixing
space $X_T:= (ET \times X)/T$, where $ET$ is the universal $T$-bundle
and $T$ acts by $t \cdot (e,x) = (e t^{-1}, t x)$. It is an algebra
over $H^*_T(pt)$, a polynomial ring $\Z[t_1, \ldots , t_r]$, where
$t_i, \ldots , t_r$ form generators for the weight lattice of $T$. We
address the reader to \cite{knutson:noncomplex} or \cite{ohmoto:eqcsm} for
basic facts on equivariant cohomology. Since $X$ is smooth, we 
can and will identify the equivariant homology $H_*^T(X)$ with the 
equivariant cohomology $H^*_T(X)$. Every closed subvariety $Y\subseteq X$
that is invariant under the $T$ action determines an equivariant fundamental
class $[Y]_T\in H_*^T(X)$.

Ohmoto defines the group of equivariant constructible functions $\cF^T(X)$ 
(for tori and for more general groups) in \cite[\S2]{ohmoto:eqcsm}.
We recall the main properties that we need: 
\begin{enumerate} 
\item If $W \subset X$ is a constructible set which is invariant under
  the $T$-action, its characteristic function $\one_W$ is an
  element of $\cF^T(X)$. 
(The group $\cF^T(X)$ also contains other elements, but
this will be immaterial for us.)
\item Every proper $T$-equivariant morphism $f: Y \to X$ of algebraic 
varieties induces a homomorphism $f_*^T: \cF^T(X)  \to \cF^T(Y)$. The 
restriction of $f_*^T$ to characteristic functions of constructible $T$-invariant sets 
coincides with the ordinary push-forward $f_*$ of
  constructible functions. See \cite[\S 2.6]{ohmoto:eqcsm}.
\end{enumerate}

Ohmoto
proves \cite[Theorem 1.1]{ohmoto:eqcsm}
that there is an equivariant version of MacPherson
transformation $c_*^T: \cF^T(X) \to H_*^T(X)$ that
satisfies $c_*^T( \one_Y) = c^T(T_Y) \cap [Y]_T$ if $Y$ is a projective,
non-singular variety, and that is functorial with respect to proper
push-forwards. The last statement means that 
for all proper $T$-equivariant morphisms $Y\to X$ the
following diagram commutes:
\[ 
\xymatrix@C=40pt{ 
\cF^T(Y) \ar[r]^{c_*^T} \ar[d]^{f_*^T} & H_*^T(Y) \ar[d]^{f_*^T} \\ 
\cF^T(X) \ar[r]^{c_*^T} & H_*^T(X) 
}
\] 

We denote by $\csm^T(X(w)^\circ):= c_*^T(\one_{X(w)^\circ})$ the
equivariant CSM class of the Schubert cell corresponding to an element $w\in W$.

Finally, we note that both the BGG operator $\partial_i$ and the right
Weyl group action $s_i$ are induced by morphisms which commute with
the $T$-action. It follows that they both determine $H^*_T(pt)$-algebra 
endomorphisms of $H^*_T(G/B)$, for which we will use the
same notation. Further, Proposition~\ref{prop:BGGop} 
and formula~\eqref{E:sk} extend to the equivariant
setting after replacing the Chern class $c_1(\caL_{\alpha_k})$
by its equivariant version $c_1^T(\caL_{\alpha_k})$. We refer
to \cite[\S3]{knutson:noncomplex} for details. The analogue of
identity~\eqref{E:raction} can be found in 
\cite[\S4, Corollary]{knutson:noncomplex};
the equivariant version includes additional terms.

\subsection{Equivariant CSM classes via the operator 
$\cT^T_k= \partial_k - s_k$} 
In this section we give the proof of the equivariant version of
Theorem~\ref{T:mainintro}.

Recall the diagram \eqref{E:BSconst} from~\S\ref{ss:BS}: 
\begin{equation}\label{E:BSconst2} 
\xymatrix{
Z \ar[rr]^{\theta_1} \ar[d]^{\pi}& & G/B \times_{G/P_{i_k}}
G/B \ar[rr]^{pr_1}\ar[d]^{pr_2} && G/B \ar[d]^{p_{i_k}} \\ 
Z'\ar[rr]^{\theta'} && G/B \ar[rr]^{p_{i_k}} && G/P_{i_k}
} 
\end{equation} 
Recall that $Z'$ is the
Bott-Samelson variety for a Weyl group element $w' \in W$ and that $Z$
is the Bott-Samelson variety corresponding to $w's_{i_k}$, where
$\ell(w's_{i_k}) > \ell(w')$.
 
Recall also that $G/B$ is a $\Pbb^1$-bundle $\Pbb(E)$ for some rank-$2$, 
equivariant
vector bundle over $G/P_{i_k}$, and that we defined
$\cE:= E \otimes \cO_E(1)$, a vector bundle over $G/B$. This bundle
fits into the exact sequence~\eqref{E:seqZ} of equivariant vector bundles
\[
\xymatrix{ 
0 \ar[r] & \cO \ar[r] & \cE \ar[r] & \cQ \ar[r]
& 0 \/,
}
\] 
where the action on $\cO$ is trivial, and $\cQ=T_{p_{i_k}}$. The inclusion 
$\cO \subset \cE$ determines an equivariant section $\sigma_{i_k}: G/B \to
\Pbb(\cE) = G/B \times_{G/P_{i_k}} G/B$, inducing the 
section $\sigma$ of $\pi$ introduced in~\S\ref{ss:BS}.

\begin{lemma}\label{lemma:diag} 
The image of the section $\sigma_{i_k}$ is the diagonal $\Delta \subset
G/B \times_{G/P_{i_k}} G/B$. 
\end{lemma}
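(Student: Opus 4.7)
The plan is to verify the equality $\sigma_{i_k}(G/B)=\Delta$ pointwise. Via $pr_2$ the fiber product $G/B\times_{G/P_{i_k}} G/B$ is canonically identified with $\Pbb(\cE)$ over $G/B$, and $\sigma_{i_k}$ is a section of $pr_2$, so $pr_2(\sigma_{i_k}(x))=x$ automatically. It therefore suffices to show that $pr_1(\sigma_{i_k}(x))=x$ as well for every $x\in G/B$.

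First I would unravel the sub-bundle $\cO\hookrightarrow \cE$ at the level of fibers. Writing $G/B=\Pbb(E)$, a point $x\in G/B$ lying over $y:=p_{i_k}(x)\in G/P_{i_k}$ is the same datum as a line $\ell_x\subseteq E_y$. Since $\cE=p_{i_k}^*E\otimes \cO_E(1)$, the inclusion $\cO\hookrightarrow \cE$ is obtained by tensoring the tautological inclusion $\cO_E(-1)\hookrightarrow p_{i_k}^*E$ with $\cO_E(1)$; fiberwise at $x$, this is the embedding
\[
\ell_x\otimes \ell_x^* \;\hookrightarrow\; E_y\otimes \ell_x^* = \cE_x \/.
\]

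Next I would invoke the canonical identification $\Pbb(V)\cong \Pbb(V\otimes L)$ valid for any vector space $V$ and line $L$, under which a sub-line $\ell\subseteq V$ corresponds to $\ell\otimes L\subseteq V\otimes L$. Taking $V=E_y$ and $L=\ell_x^*$ identifies $\Pbb(\cE_x)$ with $\Pbb(E_y)=p_{i_k}^{-1}(y)$; this identification underlies the description of $pr_1:\Pbb(\cE)\to G/B=\Pbb(E)$ in the fiber product. Under this identification, the sub-line $\ell_x\otimes \ell_x^*$ corresponds to $\ell_x\in \Pbb(E_y)$, which is precisely the point $x$ in the fiber $p_{i_k}^{-1}(y)\subseteq G/B$. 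Hence $pr_1(\sigma_{i_k}(x))=x$, giving $\sigma_{i_k}(x)=(x,x)\in \Delta$.

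The main point to handle carefully is the naturality of the identification $\Pbb(V)\cong \Pbb(V\otimes L)$, and in particular checking that it agrees with the projection $pr_1$. This is essentially the same bookkeeping that was already carried out in the proof of Theorem~\ref{thm:recursion}(b) via the computation of $\cO_\cE(-1)$ using \cite[Appendix B.5.5]{fulton:it}, so no additional obstacle is anticipated.
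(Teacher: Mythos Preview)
Your argument is correct and follows the same approach as the paper's proof: both verify that the two projections $pr_1$ and $pr_2$ restrict to the identity on the image of $\sigma_{i_k}$, hence that image is the diagonal. The paper's proof is simply the one-line assertion that $\Pbb(\cO)$ maps identically to $G/B$ under both $pr_1$ and $pr_2$, whereas you have carefully unpacked the fiberwise identification $\Pbb(\cE_x)\cong \Pbb(E_y)$ to justify the $pr_1$ part explicitly.
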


\begin{proof} 
The image of $\sigma_{i_k}$ is $\Pbb(\cO)$, which maps identically
to $G/B$ via both $pr_1$ and $pr_2$.
\end{proof}

\begin{prop}\label{prop:ident} 
\begin{itemize}
\item[(a)] We have an isomorphism of equivariant bundles 
\[
\cO_{\cE}(1) = pr_2^* \cO_{E}(-1) \otimes pr_1^* \cO_{E}(1) \/. 
\]

\item[(b)] The diagonal $\Delta$ is the zero locus of a homogeneous 
section of the bundle 
\[ 
\cO_{G/B \times_{G/P_{i_k}} G/B}(\Delta) =
pr_2^*(\cQ) \otimes \cO_{\cE}(1) \/. 
\]

\item[(c)] Let $[\Delta]_T \in H_*^T(G/B \times_{G/P_{i_k}} G/B)$ be the
equivariant fundamental class determined by $\Delta$. Then 
\[
[\Delta]_T = c_1^T(\cO_{G/B \times_{G/P_{i_k}} G/B}(\Delta)) \cap 
[G/B \times_{G/P_{i_k}} G/B]_T \/. 
\] 
\end{itemize}
\end{prop}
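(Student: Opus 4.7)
The plan is to establish the three parts sequentially, with (c) following almost immediately from (b) once the diagonal is realized as the scheme-theoretic vanishing locus of an equivariant section of the line bundle named in (b).

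For part~(a), I would simply unfold the definition $\cE = p_{i_k}^\ast(E) \otimes \cO_E(1)$ on $G/B = \Pbb(E)$ and invoke the standard identity $\cO_{F\otimes L}(1) \cong \cO_F(1) \otimes \pi^\ast L^{-1}$ from \cite[Appendix~B.5.5]{fulton:it}, applied to $F = p_{i_k}^\ast E$ and $L = \cO_E(1)$. On the projective bundle $\Pbb(p_{i_k}^\ast E) = G/B \times_{G/P_{i_k}} G/B$ the structural projection is $pr_1$, so $\cO_{p_{i_k}^\ast E}(1) = pr_1^\ast \cO_E(1)$, and the cited identity rewrites $\cO_\cE(1)$ as $pr_1^\ast \cO_E(1) \otimes pr_2^\ast \cO_E(-1)$. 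This is precisely the computation already used in the proof of Theorem~\ref{thm:recursion}(b); since every bundle and every map involved is $T$-equivariant, the isomorphism automatically holds in the equivariant category.

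For part~(b), I would use the two natural equivariant short exact sequences living on $\Pbb(\cE) = G/B \times_{G/P_{i_k}} G/B$: the tautological sequence $0 \to \cO_\cE(-1) \to pr_2^\ast \cE \to pr_2^\ast \cE/\cO_\cE(-1) \to 0$, and the pullback via $pr_2$ of the equivariant Euler sequence~\eqref{E:seqZ}, whose quotient is $pr_2^\ast \cQ$. Composing the tautological inclusion with the surjection yields an equivariant morphism $\cO_\cE(-1) \to pr_2^\ast \cQ$, equivalently a global equivariant (hence homogeneous) section $s$ of $pr_2^\ast(\cQ) \otimes \cO_\cE(1)$. By construction, $s$ vanishes at a point of $\Pbb(\cE)$ precisely where the tautological line $\cO_\cE(-1)$ lies inside the subbundle $\cO \hookrightarrow \cE$, i.e. on $\Pbb(\cO)$; by Lemma~\ref{lemma:diag} this is exactly $\Delta$. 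To upgrade this to a scheme-theoretic identification I would check transversality fiberwise: each fiber of $pr_2$ is a $\Pbb^1$, and $\Delta$ meets it in a single reduced point, so $s$ vanishes with multiplicity one along $\Delta$. This both proves that $\Delta$ is the zero scheme of $s$ and identifies $\cO(\Delta) \cong pr_2^\ast(\cQ)\otimes \cO_\cE(1)$ equivariantly.

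Part~(c) is then formal: on a smooth $T$-variety, the equivariant fundamental class of a divisor cut out by a regular equivariant section of an equivariant line bundle equals the equivariant first Chern class of that bundle capped against the fundamental class of the ambient variety. Applying this to the section $s$ of part~(b) gives the claimed identity. The one delicate point in the whole argument, and the place I would be most careful, is the scheme-theoretic nature of the vanishing in~(b): without it, one only obtains $\cO(\Delta) \cong pr_2^\ast(\cQ) \otimes \cO_\cE(1)$ up to a multiple and the fundamental-class identity in~(c) would acquire an unwanted integer factor. Once transversality is confirmed fiberwise along the $\Pbb^1$-bundle $pr_2$, however, everything falls into place.
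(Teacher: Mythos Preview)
Your proposal is correct and follows essentially the same approach as the paper: part~(a) reproduces the $\cO_\cE(1)$ computation from the proof of Theorem~\ref{thm:recursion}, part~(b) is an explicit unpacking of the reference \cite[Appendix~B.5.6]{fulton:it} that the paper cites directly, and part~(c) is the same general principle (the paper phrases it via the Borel mixing space). Your added care about the scheme-theoretic vanishing in~(b) is a reasonable clarification of what the Fulton reference already guarantees.
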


\begin{proof} 
Part (a) was proved within the proof of Theorem~\ref{thm:recursion}. 
The inclusion $\Delta\subset G/B \times_{G/P_{i_k}} G/B$ is given 
by $\Pbb(\cO) \subset \Pbb(\cE)$. Then by
\cite[Appendix B.5.6]{fulton:it} $\Delta$ is the zero locus of a
section of the bundle $\cQ \otimes \cO_\cE(1)$. 
This establishes (b).

Part (c) follows from a general fact: if an equivariant divisor $D$ is 
the zero scheme of a homogeneous section of an equivariant line bundle $\caL$
on $G/B$, then $[D]_T=c_1^T(\caL)\cap [G/B]_T$. This follows from the
analogous non-equivariant statement in the corresponding Borel 
mixing space.
\end{proof}

The next observation is that~\eqref{E:csmdefSNC} extends to the equivariant case.

Let $Z$ be a variety with a $T$-action,
and let $D\subseteq Z$ be a
divisor with simple normal crossings and equivariant components $D_i$. Then
\begin{equation}\label{eq:eqlogD} 
\csm^T(Z \smallsetminus D) = \frac{ c^T(T_Z)}{\prod_j (1+D_i^T)}\cap [Z]_T \/,
\end{equation}
where $D_i^T=c_1^T(\cO(D_i))$ (so that $D_i^T\cap [Z]_T = [D_i]_T$).

This may be proven by the same method used in the proof of 
\cite[Theorem~1]{aluffi:diff}. 

Now let $Z=Z_{i_1,\dots, i_k}$ and let $D=D_Z$ be the SNC
divisor defined in~\S\ref{ss:BS}. Recall
that $D=\pi^{-1}(D_{Z'}) \cup D_k$. 
The following is the equivariant analogue of Lemma~\ref{lemma:1rec}.

\begin{lemma}
The following identity holds in $H^T_*(G/B)$:
\[ 
\csm^T(X(w)^\circ) = \theta_* ((1 + h_k^T) \cdot  \pi^*(\csm^T(Z'^\circ))) 
\] 
where $h_k^T =(\theta')^* c_1^T(\cO_{\cE}(1))$.
\end{lemma}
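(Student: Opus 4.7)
The plan is to imitate the proof of Lemma~\ref{lemma:1rec} in the equivariant setting, checking at each step that the relevant ingredients carry over. First I would apply the equivariant SNC formula \eqref{eq:eqlogD} to $Z=Z_{i_1,\dots,i_k}$ and its SNC divisor $D_Z$. Since $D_Z = \pi^{-1}(D_{Z'}) \cup D_k$ and the components of $\pi^{-1}(D_{Z'})$ are precisely the pull-backs of the components of $D_{Z'}$, the denominator of~\eqref{eq:eqlogD} factors as $\pi^*\bigl(\prod_j(1+D_j^T)\bigr)\cdot (1+D_k^T)$, where the first product ranges over the equivariant components of $D_{Z'}$.

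Next I would verify the equivariant version of Proposition~\ref{prop:divclass}(c), namely $c^T(T_Z) = \pi^*(c^T(T_{Z'}))\cdot(1+h_k^T)(1+D_k^T)$. The argument is the same as in the non-equivariant case: the relative Euler sequence is $T$-equivariant, so the equivariant Chern roots of $\pi^*\cE'\otimes\cO_{\cE'}(1)$ are $h_k^T$ and $D_k^T$ (using the equivariant analogue of part~(a) of Proposition~\ref{prop:divclass}, which requires only that the relevant twisting identity and the splitting principle both lift to equivariant cohomology). Combining this with the previous step, the factors $(1+D_k^T)$ cancel, and after using $\pi^*$-linearity and the equivariant projection formula one obtains
\[
\csm^T(Z^\circ) = (1+h_k^T)\cdot \pi^*\Bigl(\frac{c^T(T_{Z'})}{\prod_j(1+D_j^T)}\cap [Z']_T\Bigr) = (1+h_k^T)\cdot \pi^*(\csm^T({Z'}^\circ))\/,
\]
which is the equivariant counterpart of Lemma~\ref{lem:CSMinBS}.

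Finally, I would invoke equivariant functoriality of Ohmoto's transformation $c_*^T$ under the proper $T$-equivariant morphism $\theta:Z\to X(w)$. Since $\theta$ restricts to an isomorphism $Z^\circ \to X(w)^\circ$ (Proposition~\ref{prop:resolution}) and both $Z^\circ$ and $X(w)^\circ$ are $T$-invariant, we have $\theta^T_*(\one_{Z^\circ}) = \one_{X(w)^\circ}$ in $\cF^T$, hence $\theta_*(\csm^T(Z^\circ)) = \csm^T(X(w)^\circ)$. Pushing the displayed formula forward by $\theta_*$ gives the desired identity.

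The main obstacle is the bookkeeping of the equivariant refinement of Proposition~\ref{prop:divclass}; once one checks that the Euler sequence, the identification $D_k = c_1(\pi^*\cQ'\otimes\cO_{\cE'}(1))$, and the tensor-product identity for $\cE$ are all genuinely $T$-equivariant (which they are, since the section $\sigma$ and the bundle $\cE$ come from equivariant constructions as made explicit in Proposition~\ref{prop:ident}), the rest of the proof is a direct transcription of the non-equivariant argument.
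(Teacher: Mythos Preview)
Your proposal is correct and follows essentially the same approach as the paper: apply the equivariant SNC formula~\eqref{eq:eqlogD}, establish the equivariant version of Proposition~\ref{prop:divclass}(c) (the paper does this by first invoking Proposition~\ref{prop:ident}(b) to obtain $D_k^T=c_1^T(\pi^*{\theta'}^*\cQ\otimes\cO_{\cE'}(1))$, exactly the equivariant analogue of Proposition~\ref{prop:divclass}(a) you mention), cancel the $(1+D_k^T)$ factor, and then push forward by $\theta_*$ using Ohmoto's functoriality. Your account of the bookkeeping is accurate and matches the paper's.
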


\begin{proof} 
By Proposition~\ref{prop:ident}(b), 
$\Delta^T = c_1^T(pr_2^*(\cQ) \otimes \cO_{\cE}(1))$. Pulling back by
$\theta'$, we obtain $D_k^T=c_1^T(\pi^*({\theta'}^*\cQ) \otimes \cO_{\cE}(1))$,
and this implies
\[
c^T(T_Z) = \pi^*(c^T(T_{Z'})) (1+h_k^T)(1+D_k^T) \/,
\]
arguing exactly as in the proof of Proposition~\ref{prop:divclass} (c).
The stated identity follows then from~\eqref{eq:eqlogD} by the same argument
proving Lemma~\ref{lemma:1rec} from~\eqref{E:csmdefSNC}.
\end{proof} 

\begin{theorem}\label{thm:equimain} 
Let $\cT^T_k:H^*_T(G/B) \to H^*_T(G/B)$ be the operator 
\[ 
\cT^T_k =(1 + c_1^T(\caL_{-\alpha_k})) 
\partial_k - \id = \partial_k - s_k \/. 
\] 
Then $\cT^T_k(\csm^T(X(w)^\circ)) =\csm^T(X(ws_k)^\circ)$. 
\end{theorem}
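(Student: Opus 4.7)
The plan is to mirror the non-equivariant proof of Theorem~\ref{thm:recursion} and Corollary~\ref{cor:recursion}, appealing to the equivariant lemma just established: for a reduced word $s_{i_1}\cdots s_{i_k}$ of $w$, setting $w' = s_{i_1}\cdots s_{i_{k-1}}$, one has
\[
\csm^T(X(w)^\circ) = \theta_*\bigl((1+h_k^T)\cdot\pi^*(\csm^T({Z'}^\circ))\bigr).
\]
The key intermediate step is the equivariant analogue of Theorem~\ref{thm:recursion}: for any $\gamma\in H_*^T(Z')$,
\[
\theta_*\bigl(\pi^*(\gamma)\bigr) = \partial_{i_k}(\theta'_*(\gamma))
\quad\text{and}\quad
\theta_*\bigl(h_k^T\cdot\pi^*(\gamma)\bigr) = -s_{i_k}(\theta'_*(\gamma)),
\]
from which $\theta_*((1+h_k^T)\cdot\pi^*\gamma) = \cT_{i_k}^T(\theta'_*\gamma)$ follows.

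To prove these two identities, I would step through exactly the arguments used for Theorem~\ref{thm:recursion}. The first follows from equivariant flat base change for the right square in~\eqref{E:BSconst2}, together with the definition $\partial_{i_k} = p_{i_k}^*(p_{i_k})_*$, which extends verbatim to equivariant cohomology. For the second, I would push through $\theta_1$ and $pr_1$ via the equivariant projection formula, rewrite $\tilde h_k^T = pr_1^*(\eta^T) - pr_2^*(\eta^T)$ using Proposition~\ref{prop:ident}(a) (with $\eta^T = c_1^T(\cO_E(1))$), apply flat base change for the left fiber square, and conclude using the equivariant Leibniz rule (the equivariant version of Proposition~\ref{prop:BGGop}(b) noted in the excerpt) together with $\partial_{i_k}(\eta^T) = [G/B]_T$. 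The last identity holds because $\cO_E(1)$ has equivariant degree $1$ along each $\Pbb^1$-fiber of $p_{i_k}$, as a quick equivariant localization at the two $T$-fixed points of a fiber confirms. The equivariant form of~\eqref{E:sk} then turns the resulting combination into $-s_{i_k}$, exactly as in the non-equivariant proof.

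Specializing to $\gamma = \csm^T({Z'}^\circ)$ and invoking the equivariant lemma yields
\[
\csm^T(X(w)^\circ) = \cT_k^T(\csm^T(X(ws_k)^\circ))
\]
whenever $\ell(ws_k) < \ell(w)$. Applied with $ws_k$ in place of $w$, this immediately proves the theorem when $\ell(ws_k) > \ell(w)$. For the remaining case, apply $\cT_k^T$ to both sides of the displayed identity and invoke $(\cT_k^T)^2 = \id$, which is recorded in the introductory paragraph of~\S\ref{s:equiv} as a consequence of the equivariant extension of Lemma~\ref{lem:commdelksk}. The main conceptual obstacle, and really the only thing requiring verification, is that no equivariant correction appears in the two identities above — in particular in $\partial_{i_k}(\eta^T) = [G/B]_T$ and in the equivariant Leibniz rule — but both reduce to routine arguments already referenced in the excerpt, so no ideas beyond those of the non-equivariant proof are needed.
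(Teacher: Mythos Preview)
Your proposal is correct and follows essentially the same approach as the paper, which simply states that ``the same proof applies as in the non-equivariant case, taking into account that all maps used are $T$-equivariant and that the statement of Proposition~\ref{prop:BGGop} extends without changes to the equivariant setting.'' You have faithfully unpacked what this one-liner entails, including the use of $(\cT_k^T)^2=\id$ to cover the case $\ell(ws_k)<\ell(w)$; the only minor remark is that your justification of $\partial_{i_k}(\eta^T)=[G/B]_T$ via localization is more elaborate than necessary, since a degree count already forces $(p_{i_k})_*(\eta^T)\in H^0_T(G/P_{i_k})=\Z$ to agree with its non-equivariant value.
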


\begin{proof} 
The same proof applies as in the non-equivariant case, taking into
account that all maps used are $T$-equivariant and that the statement
of Proposition~\ref{prop:BGGop} extends without changes to the
equivariant setting. 
\end{proof}

The equivariant versions of the identities~\eqref{E:partial} and~\eqref{E:raction} 
yield the following explicit formula for the equivariant operator $\cT^T_k$:

\begin{prop}\label{prop:equivTk} 
\[ 
\mathcal{T}^T_k ([X(w)])= 
\begin{cases} 
- [X(w)] & \textrm{if } \ell(ws_k) < \ell(w) \\  
(1+ w(\alpha_k))[X(ws_k)] + [X(w)] + \sum \langle \alpha_k, 
\beta^\vee \rangle [X(ws_ks_\beta)] & \textrm{if } \ell(ws_k) > \ell(w)
\end{cases} 
\]
where the sum is over all positive roots $\beta\ne \alpha_k$ such that 
$\ell(w) = \ell(ws_k s_\beta)$, and $w(\alpha_k)$ 
denotes the natural $W$ action on roots.
\end{prop}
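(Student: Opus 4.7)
The plan is to mirror the derivation of Proposition~\ref{prop:Tk} in the equivariant setting, using the equivariant analogue of~\eqref{E:raction} (referenced at the end of~\S6.1 and recorded explicitly in \cite[\S4, Corollary]{knutson:noncomplex}). The only structural novelty is the ``diagonal'' term that appears in equivariant cup products with line bundles; this term is exactly what produces the correction $w(\alpha_k)$ to the coefficient of $[X(ws_k)]$ relative to the non-equivariant case.

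First I would combine the equivariant BGG identity $\partial_k([X(w)]) = [X(ws_k)]$ if $\ell(ws_k) > \ell(w)$ (and $0$ otherwise) with the equivariant version of~\eqref{E:sk}, namely $s_k = \id - c_1^T(\caL_{\alpha_k})\,\partial_k$, and with the equivariant Chevalley formula
\[
c_1^T(\caL_\lambda)\cdot [X(v)] = -v(\lambda)\cdot [X(v)] \;+\; \sum_{\beta>0,\,\ell(vs_\beta)=\ell(v)-1} \langle \lambda,\beta^\vee\rangle\cdot [X(vs_\beta)]\/.
\]
The diagonal coefficient $-v(\lambda)$ is the $T$-character of the fiber of $\caL_\lambda$ at the $T$-fixed point $vB$, which is read off directly from the convention $\caL_\lambda = G \times^B \C_{-\lambda}$ adopted in~\S\ref{ss:FmaSv} via the standard localization identification.

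If $\ell(ws_k)<\ell(w)$, then $\partial_k([X(w)])=0$, hence $s_k([X(w)])=[X(w)]$ and therefore $\cT^T_k([X(w)]) = -[X(w)]$, as claimed. If $\ell(ws_k)>\ell(w)$, I would expand $c_1^T(\caL_{\alpha_k})\cdot[X(ws_k)]$ by the equivariant Chevalley formula, using the identity $-ws_k(\alpha_k)=w(\alpha_k)$ for the diagonal coefficient, and separating out the $\beta=\alpha_k$ term in the sum, which contributes $\langle\alpha_k,\alpha_k^\vee\rangle[X(w)]=2[X(w)]$. This produces
\[
s_k([X(w)]) = -[X(w)] - w(\alpha_k)\,[X(ws_k)] - \sum_{\beta\ne\alpha_k}\langle\alpha_k,\beta^\vee\rangle[X(ws_ks_\beta)]\/,
\]
the sum ranging over positive $\beta\ne\alpha_k$ with $\ell(ws_ks_\beta)=\ell(w)$. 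Subtracting from $\partial_k([X(w)])=[X(ws_k)]$ yields the stated formula for $\cT^T_k([X(w)])$, in which the coefficient of $[X(ws_k)]$ becomes $1+w(\alpha_k)$.

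The computation is a direct symbolic manipulation; the only delicate point is to correctly pin down the sign of the diagonal weight $-v(\lambda)$ in the equivariant Chevalley formula. This is most safely verified by equivariant localization at the fixed point $vB$, where both sides of the formula must restrict to the same class in $H^*_T(pt)$ and the restriction of $c_1^T(\caL_\lambda)$ is dictated by the sign convention in the definition of $\caL_\lambda$. Beyond that, no real obstacle is expected, and the rest of the argument is a routine parallel of the derivation of Proposition~\ref{prop:Tk}.
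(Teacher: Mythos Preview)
Your proposal is correct and follows essentially the same approach the paper indicates: the paper simply states that the formula follows from the equivariant versions of~\eqref{E:partial} and~\eqref{E:raction} (the latter cited from \cite[\S4, Corollary]{knutson:noncomplex}), and you have carried out exactly that computation, deriving the equivariant analogue of~\eqref{E:raction} via the equivariant Chevalley formula rather than citing it directly. The only point worth double-checking is the sign of the diagonal weight, which you handle correctly by noting $-ws_k(\alpha_k)=w(\alpha_k)$.
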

Notice that $ws_k > w$ implies that $w(\alpha_k) >0$.

\subsection{Equivariant positivity}
Theorem~\ref{thm:equimain} and Proposition~\ref{prop:equivTk} give an effective 
way to compute equivariant Chern-Schwartz-MacPherson classes of Schubert 
cells in $G/B$. Since the equivariant classes $[X(u)]^T$ form a basis of $H_*^T(G/B)$
over $H_*^T(pt)=\Z[t_1,\dots, t_r]$, we have an equivariant analogue
of~\eqref{E:exp}:
\begin{equation}\label{E:equivexp} 
\csm^T(X(w)^\circ) = \sum_{u \le w} c^T(u;w) [X(u)]_T 
\end{equation} 
where $c^T(u;w)$ are well-defined polynomials in the $t_i$'s. 

\begin{remark}
$\bullet$ 
Proposition~\ref{prop:equivTk} implies that $c^T(u;w)$ is a polynomial 
with integer coefficients in the roots $\alpha_i$; for example, $c^T(u;w)$ is a
polynomial in $t_i-t_{i+1}$ in type~A.

$\bullet$ 
The constant term in $c^T(u;w)$ equals the non-equivariant coefficient $c(u;w)$.

$\bullet$
By general considerations (see e.g.,~\cite[\S4.1]{ohmoto:eqcsm}), 
$\csm^T(X(w)^\circ)$ has no nonzero components in $H^T_i(G/B)$ for $i<0$. 
This implies that the polynomials $c^T(u;w)$ have degree at most $\dim(X(u))
=\ell(u)$ in the roots $\alpha_i$. In particular $c^T(\id;w)$ is constant, and it follows 
that $c^T(\id;w)=1$ for all $w$.

$\bullet$
The sum $\sum_{u,w\in W} c^T(u;w) [X(w)]^T$ equals the total equivariant
Chern class of the flag manifold $G/B$. Now, we have the identity
\[ 
c^T(T_{G/B}) = \prod_{\alpha \in R^+} (1 + c_1^T(\caL_{\alpha})) \/,
\]
where $\mathcal{L}_\alpha$ is the line bundle defined in \S \ref{ss:FmaSv}.
Indeed, 
$T_{G/B}$ is the homogeneous bundle $G \times^B (Lie(G)/Lie(B))= 
G \times^B Lie(U^-)$ where $U^-$ is the opposite unipotent group; it follows 
that the weights at the $B$-fixed point are the negative roots of $(G,B)$. 
In particular, the localization at $\id. B$ of $c^T(T_{G/B})$ equals
\[
\iota_{\id.B}^* c^T(T_{G/B}) = \prod_{\alpha \in R^+} (1 - \alpha) 
= \prod_{\alpha \in R^+} \iota_{\id.B}^* c^T(\caL_{\alpha}) \/. 
\] 
The stated identity follows by homogeneity. 
Since the polynomial $c^T(w_0;w_0)$ equals the coefficient of $[G/B]_T$ in this class,
we must have
\begin{equation}\label{eq:cw0w0}
c^T(w_0;w_0) =\prod_{\alpha\in R^+} (1+\alpha)
\end{equation}
where the product ranges over all positive roots.

$\bullet$ More generally, let $w= s_{i_1}\ldots s_{i_k}$ be a reduced decomposition. 
It follows from Proposition~\ref{prop:equivTk} that the leading term of 
$\csm^T(X(w)^\circ)=(T_{i_k} \ldots T_{i_1}) [X(id)]_T$ is 
\[
c^T(w; w)= \prod_{t=1}^{k} (1 + s_{i_1} \ldots s_{i_{t-1}}(\alpha_{i_t})) \/, 
\]
with the convention that $s_{i_0} = id$. If $w=w_0$, then the set of roots 
$s_{i_1} \ldots s_{i_{t-1}}(\alpha_{i_t})$ obtained as $t$ varies coincides with the set of 
positive roots, and one recovers \eqref{eq:cw0w0}. 
\qede\end{remark}

The following statement generalizes Conjecture~\ref{conj:pos}.

\begin{conj}\label{conj:equipos}
For all $u \le w$, the coefficient $c^T(u;w)$ is a polynomial with positive coefficients
in the simple roots $\alpha_i$. 
\end{conj}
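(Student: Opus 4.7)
The plan is to extend the partial positivity results of Section~\ref{s:posCSM} to the equivariant setting, rather than aim for a complete proof: the constant term of $c^T(u;w)$ equals the non-equivariant coefficient $c(u;w)$, so Conjecture~\ref{conj:equipos} is strictly stronger than Conjecture~\ref{conj:pos}, and a fully general proof cannot be expected to be more elementary. I would target the three cases already settled non-equivariantly: the leading coefficient, the codimension-one coefficients, and the case where $w$ admits a decomposition into distinct simple reflections.

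The leading coefficient is immediate from the formula in the remark preceding Conjecture~\ref{conj:equipos}:
\[
c^T(w;w) = \prod_{t=1}^{\ell(w)} \bigl(1 + s_{i_1}\cdots s_{i_{t-1}}(\alpha_{i_t})\bigr),
\]
for any reduced decomposition $w = s_{i_1}\cdots s_{i_k}$. Each root $s_{i_1}\cdots s_{i_{t-1}}(\alpha_{i_t})$ is positive by the standard bijection between reduced-word positions and the inversion set of $w$, so $c^T(w;w)$ is manifestly a positive integer polynomial in the simple roots. For the codimension-one case, I would lift Proposition~\ref{prop:codone} to the equivariant setting: the adjunction formula $K_Z = \cO_Z(-D_Z) \otimes \theta^*(c_1(\caL_{-\rho}))$ is $T$-equivariant, the inductive computation $c^T(T_Z) = \pi^*(c^T(T_{Z'}))(1+h^T_k)(1+D^T_k)$ is established in the proof preceding Theorem~\ref{thm:equimain}, and the projection formula holds equivariantly. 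Combined with the equivariant Chevalley formula, this identifies the degree-zero part of $c^T(u;w)$ (for $\ell(u) = \ell(w) - 1$, $u = ws_\beta$) with $\langle\rho,\beta^\vee\rangle > 0$. Recovering the higher polynomial degrees of $c^T(u;w)$ for such $u$ would require analyzing the next term in the expansion $\prod_j (1+h^T_j)$, and is a genuine additional step.

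For the distinct-reflection case, Proposition~\ref{prop:equivTk} provides the inductive tool: when $\ell(ws_k) > \ell(w)$ the factor $(1+w(\alpha_k))$ multiplying $[X(ws_k)]_T$ is a positive polynomial, since $w(\alpha_k)$ is then a positive root, and the other numerical coefficients $\langle\alpha_k,\beta^\vee\rangle$ are exactly those appearing in the non-equivariant setting. The support argument of Proposition~\ref{prop:Tkpos}(a)---bounding $\beta$ using the simple reflections appearing in a decomposition of $v$ and deducing $\langle\alpha_k,\beta^\vee\rangle \ge 0$---carries over unchanged. An induction paralleling Corollary~\ref{cor:posdist} then delivers equivariant positivity whenever $w$ admits a decomposition into distinct simple reflections.

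The main obstacle to the full conjecture is the same as in the non-equivariant setting, only more delicate: $\cT^T_k$ does not preserve positivity on general positive combinations of Schubert classes because the integers $\langle\alpha_k,\beta^\vee\rangle$ can be negative, and the equivariant factor $(1+w(\alpha_k))$ provides positivity only for the $[X(ws_k)]_T$ coefficient. Huh's effective-cycle proof of the Grassmannian case does not evidently lift, since a $T$-equivariant effective representative would be a substantially stronger structure than polynomial positivity in simple roots. A more promising avenue may pass through the stable envelopes of Maulik--Okounkov for the cotangent bundle of $G/B$ alluded to in the introduction, where positivity of root-polynomial coefficients admits a geometric interpretation via Lagrangian cycles.
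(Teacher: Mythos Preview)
The statement is a \emph{conjecture}, not a theorem: the paper does not prove it. After stating Conjecture~\ref{conj:equipos}, the paper offers only computational evidence (``This statement is supported by explicit computations for low dimensions in type~A'') and the example of the $\Gamma_T$ matrix for $\Fl(3)$. There is no partial proof in the paper against which to compare your argument.

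Your proposal is therefore not a proof attempt but a programme for extending the partial non-equivariant results of \S\ref{s:posCSM} to the equivariant setting, and as such it goes \emph{further} than the paper does. Your claims are largely sound. The positivity of $c^T(w;w)$ is already recorded in the remark preceding the conjecture, so that case is not new. Your extension of Proposition~\ref{prop:Tkpos} and Corollary~\ref{cor:posdist} is correct: Proposition~\ref{prop:equivTk} shows that the only new ingredient in $\cT_k^T([X(v)]_T)$ beyond the non-equivariant formula is the factor $(1+v(\alpha_k))$ on the leading term, and $v(\alpha_k)$ is a positive root when $vs_k>v$; the remaining coefficients $\langle\alpha_k,\beta^\vee\rangle$ are the same integers treated in Proposition~\ref{prop:Tkpos}(a), so the induction of Corollary~\ref{cor:posdist} does carry over and yields equivariant positivity for $w$ admitting a decomposition into distinct simple reflections. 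For the codimension-one case you are right to flag the gap: lifting Proposition~\ref{prop:codone} equivariantly gives the codimension-one component as $c_1^T(\caL_\rho)\cdot[X(w)]_T$, but the equivariant Chevalley formula contributes an additional term $(\rho - w(\rho))[X(w)]_T$ which feeds into $c^T(w;w)$ rather than into the codimension-one coefficients, and the coefficients $c^T(ws_\beta;w)$ themselves are polynomials of degree up to $\ell(w)-1$, not just the constant $\langle\rho,\beta^\vee\rangle$; the higher-degree contributions come from products of several $h_j^T$'s and are not captured by the first-order argument.

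In short: there is no proof in the paper to compare against, and your honest assessment that a full proof is out of reach by these methods is accurate. The partial results you sketch (leading term, distinct-reflection case) are correct and would be genuine, if modest, additions to what the paper establishes.
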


This statement is supported by explicit computations for low dimensions in type A.

\begin{example}
Let $\Gamma_T$ be the matrix whose $(u,w)$ entry is $c^T(u;w)$. 
For $Fl(3)$, listing the elements of $S_3$ in the order $(123), (132), (213), (231), (312), (321)$,
we have
{\tiny
\[\Gamma_T=
\begin{pmatrix}
1 & 1 & 1 & 1 & 1 & 1 \\
0 & 1+\alpha_2 & 0 & 2+\alpha_1+\alpha_2 & 1+\alpha_2 & 2+\alpha_1+\alpha_2 \\
0 & 0 & 1+\alpha_1 & 1+\alpha_1 & 2+\alpha_1+\alpha_2 & 2+\alpha_1+\alpha_2 \\
0 & 0 & 0 & (1+\alpha_1)(1+\alpha_1+\alpha_2) & 0 & (1+\alpha_1)(1+\alpha_1+\alpha_2) \\
0 & 0 & 0 & 0 & (1+\alpha_2)(1+\alpha_1+\alpha_2) & (1+\alpha_2)(1+\alpha_1+\alpha_2) \\
0 & 0 & 0 & 0 & 0 & (1+\alpha_1)(1+\alpha_2)(1+\alpha_1+\alpha_2) \\
\end{pmatrix}
\]
}
verifying Conjecture~\ref{conj:equipos} in this case.
\qede\end{example}

%%%

\bibliographystyle{halpha}
\bibliography{csm_flagsbib}

%%%

\end{document}